
\documentclass[english]{amsart}
\usepackage{preamble}

\begin{document}

\title{On the Non-Existence of J-Homomorphisms of Higher Height}

\date{\today}
\maketitle

\begin{abstract}
We show that for a large class of connective spectra, the mapping space into the units of the $p$-complete sphere spectrum is insensitive to $L_1$-localization followed by taking connective cover.
\end{abstract}

\tableofcontents{}

\section{Introduction}
\subsection{Background and Motivation}
Let $\Sph$ be the sphere spectrum and $\Ort$ be the space of infinite orthogonal matrices, that is, the colimit of the Lie groups $\Ort_n$ for $n\to \infty$.
The classical $J$-homomorphism, introduced by Whitehead \cite{whiteheadJhomomorphism}, is a map $\pi_*\Ort \to \pi_*\Sph$ constructed from the linear actions of orthogonal matrices on spheres. The $J$-homomorphism is almost a map of graded groups, except for $\pi_0$, where it is the map $\ZZ/2\to \ZZ$ sending $1$ to $-1$. This deficient behavior on $\pi_0$ may be explained by the fact that the $J$-homomorphism comes from a map of spectra $\Ort \to \Sph^\times$, where $\Sph^\times$ is the connective spectrum of \emph{units} in the sphere spectrum, and where $\Ort$ is regarded as a connective spectrum via the operation of orthogonal sum of matrices. While the higher homotopy groups of $\Sph^\times$ are the same as that of $\Sph$, the lowest one is $\ZZ/2$, generated by $-1$ under multiplication, which is now compatible with the multiplication of $\pi_0\Ort\simeq \ZZ/2$ via the $J$-homomorphism. 

Starting with the work of Adams,
the image of the $J$-homomorphism has been extensively studied \cite{AdamsJI,AdamsJII,AdamsJIII,mahowald1970order,bruner2022adams}. It constitutes the piece of the stable homotopy groups of spheres detected by the homotopy of the $L_1$-local sphere (see, e.g., \cite[Theorem 7]{JacobChrom35}). As in the discussion \cite[\S 6.5]{pAdicJ}, one could hope to similarly hit elements of higher chromatic height via higher height analogs of the $J$-homomorphism. Focusing on the $p$-complete version of the sphere spectrum, one could try to map into $\Sph_p^\times$ the connective cover of Lubin-Tate theory $E_n$, or the $K$-theory spectrum of a ring spectrum of chromatic height $n \ge 1$, which by the chromatic redshift principle of Ausoni and Rognes detects height $n+1$ information (see \cite{Rognes_Redshift,land2020purity,clausen2020descent,Nulls} for more precise formulations). 

In this paper we show that there is no higher height analog of the $J$-homomorphism. More precisely, under suitable assumptions on a spectrum $X$, a map
$J'\colon X\to \Sph_p^\times$ factors canonically through the connective cover of the $L_1$-localization $\tau_{\ge 0}L_1X$ (and in fact, up to $\pi_0$, through the connective cover of its $K(1)$-localization).  Hence, informally all the information obtained from higher $J$-homomorphisms on the stable homotopy groups of spheres is already seen at height $1$.

\subsection{Main Result}
For $n\ge 0$, let $T(n)$ be the telescope of a $v_n$-self map of a finite spectrum of type $n$, and set $T(\infty):=\FF_p$. Let $\Spch\subseteq \Sp$ be the full subcategory generated under colimits and desuspensions from the $T(n)$-s for $n=0,1\dots,\infty$.
\begin{theorem}\label{itro:main}
Let $X\in \Spch$ be a connective spectrum. Then,    
\[
\Map(X,\Sph_p^\times) \simeq \Hom_\Ab(\pi_0X,\FF_p^\times)\times \Map(\tau_{\ge 0}L_{K(1)}X,(\Sph_p^\times)^\wedge_p).
\]
\end{theorem}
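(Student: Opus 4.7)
The plan is to split the target $\Sph_p^\times$ into a prime-to-$p$ discrete piece and a $p$-complete piece, and, for the latter, to establish a chromatic vanishing which forces mapping from connective $X\in\Spch$ to factor through $\tau_{\ge 0}L_{K(1)}X$.

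For odd $p$, the Teichm\"uller embedding $\mu_{p-1}\hookrightarrow \ZZ_p^\times=\pi_0\Sph_p^\times$ gives a splitting of $\pi_0$. Because $\FF_p^\times$ has order prime to $p$ while every $\pi_n\Sph_p^\times$ with $n\ge 1$ is finite $p$-power torsion, I expect this $\pi_0$-splitting to lift to a spectrum-level decomposition
\[
\Sph_p^\times \simeq H\FF_p^\times \times (\Sph_p^\times)^\wedge_p,
\]
the case $p=2$ being analogous with trivial first factor. Since $H\FF_p^\times$ is discrete, mapping into it contributes $\Map(X,H\FF_p^\times)\simeq\Hom_\Ab(\pi_0 X,\FF_p^\times)$, which is the first factor of the theorem.

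It then remains to establish
\[
\Map(X,(\Sph_p^\times)^\wedge_p)\simeq\Map(\tau_{\ge 0}L_{K(1)}X,(\Sph_p^\times)^\wedge_p).
\]
Since $X$ is connective, the canonical map $X\to L_{K(1)}X$ lifts uniquely to $X\to\tau_{\ge 0}L_{K(1)}X$; letting $F$ denote its fiber, the task becomes showing that $\Map(F,(\Sph_p^\times)^\wedge_p)$ is contractible. The spectrum $F$ sits in a fiber sequence assembled from the $K(1)$-acyclic fiber of $X\to L_{K(1)}X$ and the bounded-above piece $\Omega\,\tau_{\le -1}L_{K(1)}X$; the latter is controlled by a Postnikov/connectivity analysis against the connective $p$-complete target, so the problem reduces to the $K(1)$-acyclic fiber.

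The main obstacle --- and the heart of the paper --- is showing that this $K(1)$-acyclic fiber admits no nontrivial map to $(\Sph_p^\times)^\wedge_p$. For $X\in\Spch$, the chromatic structure expresses the fiber via its $T(n)$-components for $n\neq 1$. The rational ($n=0$) case is immediate since the target is $p$-complete; the $T(\infty)=\FF_p$ case requires a direct Steenrod-type calculation. The substantive content --- where I expect the real work --- is ruling out maps from $T(n)$-built spectra for $n\ge 2$: this is precisely the non-existence of higher-height $J$-homomorphisms announced in the introduction, and its proof presumably combines the Rezk/Ando--Hopkins--Rezk logarithm --- identifying $p$-complete units of $K(1)$-local rings with $K(1)$-local additive data --- with redshift-type vanishing results bounding the chromatic content of $\Sph_p^\times$.
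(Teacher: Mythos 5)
Your decomposition of the target as $\FF_p^\times \oplus (\Sph_p^\times)^\wedge_p$ and the reduction to a vanishing statement for the fiber of $X\to\tau_{\ge0}L_{K(1)}X$ are both correct and match the paper's first step. The rest, however, is where the real content lives, and the two places you wave your hands are precisely the two hard inputs of the paper, and one of your guesses points at the wrong mechanism.

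First, the ``bounded-above piece'' $\Omega\,\tau_{\le -1}L_{K(1)}X$ is not controlled by any routine Postnikov/connectivity analysis. Since $(\Sph_p^\times)^\wedge_p$ is merely connective, there is no a priori reason for a bounded-above spectrum to map trivially into it. The paper handles this by proving that $(\Sph_p^\times)^\wedge_p$ arises as $\tau_{\ge 0}$ of a (non-connective) spectrum $\br$ which admits no maps from $\ZZ$ nor from any bounded-above spectrum; constructing $\br$ requires the main theorem of \cite{carmeli2023strict} (vanishing of $\Map(\ZZ,\pic(\Sph_p)^\wedge_p)$). With that in hand, maps from connective $X$ into $(\Sph_p^\times)^\wedge_p$ can be replaced by maps into $\br$, and the whole bounded-above problem evaporates. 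Without this delooping, your argument has a genuine gap at exactly this step.

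Second, and more importantly, the vanishing for $T(n)$-built spectra with $n\ge 2$ is not proved by the Ando--Hopkins--Rezk logarithm together with redshift. The logarithm identifies $K(n)$-local units with additive data but does not by itself produce a vanishing theorem of the required form, and no redshift bound enters. The actual mechanism is: Lee's theorem that the $p$-complete stable cohomotopy of $B^2C_p$ vanishes (a Segal-conjecture-flavored input) shows that $\br$ lies in $L_{\ideal{B^2C_p}}\Sp$; and the chromatic cyclotomic extensions of \cite{carmeli2021chromatic} are used to show $T(n)\in\ideal{B^nC_p}\subseteq\ideal{B^2C_p}$ for every finite $n\ge 2$ (with $T(\infty)=\FF_p$ handled by the same ideal argument, not a separate Steenrod computation). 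Combined, these give $\Map(X,\br)\simeq\Map(L_1^fX,\br)$ for $X\in\Spch$, and $p$-completeness of $\br$ upgrades $L_1^f$ to $L_{K(1)}$. So while your structural outline is consistent with the truth, the two crucial inputs you leave open are not of the nature you suggest, and the proposed route would not close.
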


\begin{rem} \label{rem:pic_instead_of_gl}
The result in \Cref{itro:main} could be improved. Conditionally on a work in progress with Burklund and Ramzi, one can lift the ($p$-complete version of) the result from the units $\Sph_p^\times$ to the Picard spectrum $\pic(\Sph_p)$, which is arguably a more natural recipient for the $J$-homomorphism. We shall elaborate on this fact and its generalizations in the mentioned work in progress.  
\end{rem}

We conclude the discussion of our main result by pointing out that
the class of spectra $\Spch$ include several families of spectra of interest. First, it contains all the connective covers of $L_n^f$-local spectra, such as Lubin-Tate theories and the the $K(n)$-local spheres.  By the validity of the Quillen-Lichtenbaum conjectures for the integers (\cite{KahnQLTwo,RogQLTwo,voevodsky2011motivic,WaldKchrom}), $\Spch$ contains also all the $K$-theory spectra of $\ZZ$-algebras (see \Cref{K-theory-disc-chrom}). More generally, a higher height generalization of Quillen-Lichtenbaum conjecture, established by Hahn and Wilson for truncated Brown-Peterson spectra in \cite{hahn2020redshift}, shows that $\Spch$ contains the $K$-theory of arbitrary  $\BP{n}$-algebras, where $\BP{n}$ stands for an $\EE_3$-form of the truncated Brown-Peterson spectra as in \cite{hahn2020redshift} (see \Cref{HW-chromatic}). 
One could optimistically hope that these examples are instances of a general phenomenon, closely related to chromatic redshift.
\begin{conjecture}
Let $R$ be an $\EE_1$-ring spectrum. If $R\in \Spch$ then $K(R)\in \Spch$. 
\end{conjecture}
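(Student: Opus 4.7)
The plan is to combine a more intrinsic characterization of $\Spch$, trace methods, and the Hahn-Wilson redshift theorem as the base arithmetic input, with the caveat that (since the statement is a conjecture) several steps below remain genuinely open.

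First I would try to describe $\Spch$ more concretely. Since $T(\infty) = \FF_p \in \Spch$ and $\Spch$ is closed under colimits and desuspensions, it contains every $p$-complete spectrum, and similarly every $T(n)$-local spectrum. A natural guess is that $X \in \Spch$ if and only if $X$ is recoverable from its telescopic localizations $L_{T(n)}X$ via an appropriate chromatic fracture. Such a characterization would convert the conjecture into a question about how $K$-theory interacts with each telescopic piece, rather than about colimits and desuspensions directly.

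Second, since $K$-theory is not colimit-preserving on $\EE_1$-rings, I would pass to trace methods. Using Dundas-Goodwillie-McCarthy, the problem reduces (on the relative $p$-complete fiber) to $TC(R)$, and via Nikolaus-Scholze to $THH(R)$ with its cyclotomic structure. $THH$ is much better behaved than $K$: as a symmetric monoidal localizing invariant one can hope to show $R \in \Spch \Rightarrow THH(R) \in \Spch$ using closure of $\Spch$ under smash products and colimits. The passage from $THH$ to $TC$ via the fiber of $\varphi - \mathrm{can}$, combined with the Hahn-Wilson theorem as the $\BP{n}$-algebra input, would then hopefully propagate the chromatic control to $K(R)$.

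The main obstacle is that $\Spch$ is not closed under general limits, so taking $S^1$-fixed points and the equalizer defining $TC$ may a priori leave the class. In addition, the open status of the telescope conjecture at heights $\ge 2$ complicates transferring $K(n)$-local results (where Hahn-Wilson lives) to the telescopic setting defining $\Spch$. I would expect any complete proof to require either a new intrinsic description of $\Spch$ compatible with cyclotomic structure, or a substantial strengthening of existing redshift results to cover connective $\EE_1$-rings of arbitrary telescopic support, not merely $\BP{n}$-algebras.
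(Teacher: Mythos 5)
This statement is labeled a conjecture in the paper, and the author offers no proof of it, so there is no ``paper argument'' to compare against. Your proposal is appropriately caveated and correctly recognizes the open status, but the first paragraph contains a substantive error that would derail any attempt built on it.

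You write that since $\FF_p\in\Spch$ and $\Spch$ is closed under colimits and desuspensions, $\Spch$ ``contains every $p$-complete spectrum, and similarly every $T(n)$-local spectrum.'' Closure under colimits, desuspensions, and tensoring with $E\in\Spch$ does \emph{not} imply that $E$-local spectra lie in $\Spch$: $E$-local objects are not built from $E$ by colimits, and $p$-completion in particular is a \emph{limit}, $X^\wedge_p=\lim_k X/p^k$. In fact $\Sph_p\notin\Spch$: if it were, the arithmetic fracture fiber sequence $\Sph_{(p)}\to \Sph_p\oplus \Sph_\QQ\to(\Sph_p)_\QQ$ (localizing ideals are closed under fibers, and $\Sph_\QQ=T(0)$ and $(\Sph_p)_\QQ$ both lie in $\Spch$) would force $\Sph_{(p)}\in\Spch$, hence $\Spch=\Sp_{(p)}$, making the hypothesis of the main theorem vacuous and most of the paper's verifications (e.g.\ for $K(\ZZ)$, for $\BP{n}$-algebras) pointless. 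The $T(n)$ case happens to have a true conclusion, but for a different reason: a $T(n)$-local spectrum is automatically $L_n^f$-local (since $\ideal{F(n+1)}\subseteq\ann(T(n))$), and $L_n^f\Sp=\ideal{T(0),\dots,T(n)}\subseteq\Spch$ by smashing-ness; your argument does not see this distinction and so gives a false result in the $\FF_p$ case. Any intrinsic ``telescopic-fracture'' characterization of $\Spch$ would have to respect the colimit/limit asymmetry, and this is precisely what is missing.

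The trace-methods sketch ($K\rightsquigarrow TC\rightsquigarrow THH$ via Dundas--Goodwillie--McCarthy and Nikolaus--Scholze, with Hahn--Wilson as arithmetic input) is a reasonable heuristic and agrees with what the paper actually proves in the special cases (\Cref{K-theory-disc-chrom}, \Cref{HW-chromatic}). You also correctly identify the real obstruction: $\Spch$ is not closed under limits, so $(-)^{hS^1}$ and the equalizer defining $TC$ need not preserve $\Spch$. That is exactly why the statement remains a conjecture rather than a proposition; the proposal diagnoses the difficulty accurately but does not supply a mechanism to overcome it beyond the $\BP{n}$-algebra cases the paper already handles.
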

Thus, if the conjecture holds, then our main theorem applies to the $K$-theory spectra of arbitrary $\EE_1$-rings in $\Spch$.

\subsection{Outline of the Proof} 

First, using the decomposition $\Sph_p^\times \simeq \FF_p^\times \oplus (\Sph_p^\times)^\wedge_p$, the statement is equivalent to $\Map(X,(\Sph_p^\times)^\wedge_p) \simeq \Map(\tau_{\ge 0}L_{K(1)}X,(\Sph_p^\times)^\wedge_p)$. 
The key features of $(\Sph_p^\times)^\wedge_p$ that come into the proof of this isomorphism are the following two:
\begin{enumerate}
\item \label{item:cond_1} There are no maps from $\ZZ$ to its suspension: 
\[
\Map(\ZZ,\Sigma (\Sph_p^\times)^\wedge_p)\simeq \pt.
\]
\item \label{item:cond_2} There are no maps from the reduced suspension spectrum of $B^2C_p$ to it. Equivalently, the pullback map 
\[
\pi^*(\Sph_p^\times)^\wedge_p \to ((\Sph_p^\times)^\wedge_p)^{B^2C_p} 
\]
along the terminal map $\pi\colon B^2C_p \to \pt$
is an isomorphism.
\end{enumerate}
The first property is an immediate consequence of the main result of \cite{carmeli2023strict}. The second follows from a result of Lee, giving the vanishing of the stable cohomotopy of the spaces $B^mC_p$ for $m\ge 2$  (see \cite{lee1992stable}). Interestingly, these two properties both follow, in one way or another, from the Segal conjecture (see \cite{carlsson1984equivariant,lin1980conjectures,adams1992segal}). 

The way these two ingredients combine to give the main result is relatively straight forward: we essentially show that the discrepancy between a connective spectrum in $\Spch$ and the connective cover of its $L_1$-localization can be built from the two spectra $\Sigma^{-1}\ZZ$ and $\Sigma^\infty B^2C_p$ under colimits. In practice, it is convenient to consider such generation questions in the stable settings, allowing colimits \emph{as well as desuspensions}. Thus, we take a somewhat indirect approach. 

First, we show that condition (\ref{item:cond_1}) allows us to extrapolate $(\Sph_p^\times)^\wedge_p$ to negative homotopy degrees and obtain a spectrum $\br$, which by condition (\ref{item:cond_2}) receives no maps from any desuspension of $\Sigma^\infty B^2C_p$. Then, we show that the latter property alone implies that $\br$ satisfies the stable version of the main result: for $X\in \Spch$ we have
\[
\Map(X,\br) \simeq \Map(L_1X,\br).
\] 

This stable version of \Cref{itro:main} follows by analyzing the chromatic behavior of the localizing ideal generated by $\Sigma^\infty B^2C_p$: we show, using the chromatic cyclotomic extensions of \cite{carmeli2021chromatic}, that this localizing ideal contains all the telescopes $T(n)$ for $n\ge 2$, including $\infty$. This allows us to relate co-acyclicity with respect to $\Sigma^\infty B^2C_p$ and co-acyclicity with respect to the various telescopes, giving the stable version above. 
The main result follows from this stable version by restricting to connective $X$, for which maps to $\br$ and $(\Sph_p^\times)^\wedge_p$ coincide.

\subsection{Acknowledgements} The main idea behind this work emerged during the author's participation in the Homotopy Theory workshop at Oberwolfach, and he is grateful to the organizers, administration, and staff for the invitation and hospitality. Parts of the content of this paper, especially the relationship between Eilenberg-MacLane spaces and chromatic localizations, were discussed before with Schlank and Yanovski. The author would like to thank them for these discussions and years of fruitful collaboration, which directly and indirectly influenced this work. He would also like to thank Robert Burklund for several suggestions and comments that simplified some proofs, and Dustin Clausen, Emmanuel Farjoun, Jeremy Hahn, and Ishan Levy for valuable discussions regarding possible applications and interpretations of this work. Finally, he would like to thank Allen Yuan and Robert Burklund for comments on a draft.    
The author is partially supported by the Danish National Research Foundation
through the Copenhagen Centre for Geometry and Topology (DNRF151).

\section{Ideals in Symmetric Monoidal Stable Categories}
We recall the definition and basic properties of localizing ideals in symmetric monoidal stable $\infty$-categories. We warn the reader that we work in the presentable settings, which are different from various contexts for studying localizing ideals in the literature. 

\begin{defn}
Let $\cC\in \calg(\Prst)$. A \tdef{localizing ideal} in $\cC$ is a full presentable subcategory $I\subseteq \cC$ closed under colimits and tensoring with arbitrary objects of $\cC$. We shall indicate that $I$ is a localizing ideal by the notation $I\unlhd \cC$.
The quotient $\cC/I$ is the cofiber of the map $I\into \cC$ in $\Prst$. 
\end{defn}

For a set of objects $S\subseteq \cC$, we denote by $\ideal{S}$ the localizing ideal generated by $S$, i.e., the full subcategory of $\cC$ generated from $S$ under colimits and tensoring with objects of $\cC$. 
\begin{example}\label{ex:Bausfield_Class}
Let $\cC\in \calg(\Prst)$.
For $E\in \cC$, the collection 
\[
\mdef{\ann(E)} := \{X\in \cC : X\otimes E = 0\}
\]
is a localizing ideal, also known as the \tdef{Bausfield class} of $E$. The quotient $\cC/\ann(E)$ is equivalent to the Bausfield localization of $\cC$ with respect to $E$ and denoted $\cC_E$. 
\end{example}

\begin{war}
In various sources the notation $\ideal{E}$ is reserved for the Bausfield class of $E$. Since Bausfield classes will play no role in this work, we hope that this notational conflict will not lead to a confusion. 
\end{war}

For $I\unlhd \cC$, the quotient functor $\cC \to \cC/I$ is a symmetric monoidal localization. The right adjoint of this localization is a fully faithful embedding $\cC/I\into \cC$ whose essential image consists of those $Y\in \cC$ for which $\Map(X,Y)=\pt$ for all $X\in I$.
We denote the composition of the quotient functor $\cC\to \cC/I$ and its right adjoint $\cC/I\into \cC$ by 
\[
\mdef{L_I}\colon \cC \to \cC,
\]
and its image by $\mdef{L_I\cC}\subseteq \cC$. Thus, the composition 
\[
L_I\cC \into \cC \to \cC/I
\]
is an equivalence. 

\begin{war}
Note that the Bausfield localization $L_E$ and our localization $L_{\ideal{E}}$ are different: the kernel of the first is the annihilator $\ann(E)$ and of the other is the  localizing ideal $\ideal{E}$ generated by it.
\end{war}
Ideals in symmetric monoidal stable $\infty$-categories behave much like ordinary ideals of commutative rings. For example, 
if $I,J\unlhd \cC$, their intersection $I\cap J$ is again a localizing ideal of $\cC$. Similarly, we define the sum $I+J$ to be the localizing ideal generated from $I$ and $J$ under colimits and tensoring with objects of $\cC$. The product $I\cdot J$ is the localizing ideal generated by the objects $X\otimes Y$ for $X\in I$ and $Y\in J$. 








\begin{prop}\label{principal_ideal_splits}
Let $\cC\in \calg(\Prst)$, let $M\in\cC$ and let $\nu\colon \Sigma^kM\to M$ be a self-map of some degree $k$. Then 
\[
\ideal{M/x}+\ideal{x^{-1}M} = \ideal{M}.
\]  
\end{prop}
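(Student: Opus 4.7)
The plan is to prove the two inclusions separately. The direction $\ideal{M/x}+\ideal{x^{-1}M}\subseteq \ideal{M}$ is routine bookkeeping: the cofiber $M/x$ is a finite colimit built from shifts of $M$, hence it lies in $\ideal{M}$, and the telescope $x^{-1}M$ is the sequential colimit of the tower $M\to\Sigma^{-k}M\to\Sigma^{-2k}M\to\cdots$. Each term equals $M\otimes \Sigma^{-jk}\mathbf{1}$, which lies in $\ideal{M}$ by closure under tensoring with arbitrary objects of $\cC$, so the colimit stays in $\ideal{M}$.

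For the reverse inclusion, I would write $I:=\ideal{M/x}+\ideal{x^{-1}M}$ and consider the symmetric monoidal quotient $q\colon \cC\to \cC/I$, which is a left adjoint and hence preserves colimits. The goal reduces to verifying that $q(M)\simeq 0$. Applying $q$ to the defining cofiber sequence $\Sigma^k M\xrightarrow{\nu}M\to M/x$ and using $q(M/x)=0$, the map $q(\nu)\colon \Sigma^k q(M)\to q(M)$ becomes an equivalence in $\cC/I$. Consequently the telescope $q(\nu)^{-1}q(M)$, being a sequential colimit of equivalences, is canonically identified with $q(M)$ itself.

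On the other hand, since $q$ commutes with the sequential colimit defining the telescope, $q(\nu)^{-1}q(M)\simeq q(x^{-1}M)$, which vanishes because $x^{-1}M\in I$ by construction. Chaining the two identifications yields $q(M)\simeq 0$, that is, $M\in I$, which gives $\ideal{M}\subseteq I$. The argument is essentially formal; no serious obstacle is anticipated beyond the initial observation that desuspension inside a localizing ideal is controlled by tensoring with $\Sigma^{-1}\mathbf{1}$, which is what ensures the telescope lands in $\ideal{M}$ in the first step.
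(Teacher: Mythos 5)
Your proof is correct. The forward inclusion matches the paper exactly, but you handle the reverse inclusion by a different, more formal route. The paper argues in $\cC$ directly: it identifies
\[
\Cofib(M\to x^{-1}M)\simeq \colim_{\ell}\,\Sigma^{-\ell k}M/x^{\ell},
\]
observes that each term lies in the thick subcategory generated by $M/x$, concludes that the cofiber lies in $\ideal{M/x}$, and then reads off $M\in\ideal{M/x}+\ideal{x^{-1}M}$ from the resulting fiber sequence. You instead pass to the Verdier quotient $q\colon\cC\to\cC/I$ and show $q(M)\simeq 0$, using that $q(M/x)=0$ forces $q(\nu)$ to be invertible, hence $q(M)\simeq q(\nu)^{-1}q(M)\simeq q(x^{-1}M)\simeq 0$; this relies (correctly, in the presentable setting) on the fact that the kernel of the quotient functor is exactly $I$. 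The two arguments encode the same idea: after killing $M/x$, the map $M\to x^{-1}M$ becomes an equivalence. The paper's version is more explicit and has the side benefit of producing a concrete colimit description of the cofiber, which is reusable elsewhere; your quotient-category version is cleaner and avoids having to exhibit that colimit presentation. Both are valid proofs of the proposition.
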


\begin{proof}
On the one hand, $M/x$ and $x^{-1}M$ are colimits of copies of $M$ and therefore belong to $\ideal{M}$. On the other hand, the cofiber of the map $M\to x^{-1}M$ can be written as a colimit of the form 
\[
\Cofib(M\to x^{-1}M)\simeq\colim_{\ell \to \infty}(\Sigma^{-\ell k} M/x^{\ell}).
\]
Since each of the objects $\Sigma^{-\ell k} M/x^{\ell}$ belong to the thick subcategory generated by $M/x$, we deduce that $M\in \ideal{x^{-1} M} + \ideal{M/x}$ and the result follows. 
\end{proof}

\section{Ideals of Discrete Spectra}

We now consider ideals of spectra concentrated in degree $0$. The two cases of interest are $\ideal{\ZZ}$ and $\ideal{\FF_p}$. Note that by \Cref{principal_ideal_splits} we have 
\begin{equation}\label{eq:split_ideal_Z}
\ideal{\ZZ} = \ideal{\ZZ[\inv{p}]} + \ideal{\FF_p}.
\end{equation}

\begin{prop}\label{coacylcic_vs_bounded_above}
The ideal $\ideal{\ZZ}\subseteq \Sp$ contains all the bounded above spectra, while the ideal $\ideal{\FF_p}$ contains all bounded above spectra with homotopy groups which are $p$-power torsion.  
\end{prop}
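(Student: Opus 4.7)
The plan is to reduce both statements to the case of Eilenberg--MacLane spectra, and then build these from $\ZZ$ or $\FF_p$ by explicit resolutions.

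For the reduction, I would use that any spectrum $X$ is the filtered colimit of its connective covers $\tau_{\geq -n} X$, since homotopy groups commute with filtered colimits. When $X$ is bounded above with $\pi_k X = 0$ for $k > N$, each truncation $\tau_{\geq -n} X$ has homotopy concentrated in the finite range $[-n, N]$. Iterating the cofiber sequences
\[
\tau_{\geq m+1} X \to \tau_{\geq m} X \to \Sigma^m H\pi_m X,
\]
starting from $\tau_{\geq N+1} X = 0$ and descending in finitely many steps, one can build $\tau_{\geq -n} X$ from the shifted Eilenberg--MacLane spectra $\Sigma^k H\pi_k X$ with $-n \leq k \leq N$. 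Any localizing ideal is closed under colimits (hence cofibers) and under desuspension (by tensoring with $\Sigma^{-1}\Sph$), so it is enough to show that $H\pi_k X \in \ideal{\ZZ}$ in general, and $H\pi_k X \in \ideal{\FF_p}$ when $\pi_k X$ is $p$-power torsion.

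For the Eilenberg--MacLane step in the first case, take an arbitrary abelian group $A$ and a free resolution $0 \to F_1 \to F_0 \to A \to 0$. Since $H$ preserves coproducts of abelian groups, each $HF_i$ is a coproduct of copies of $H\ZZ = \ZZ$, and so $HA \simeq \Cofib(HF_1 \to HF_0) \in \ideal{\ZZ}$. For the second case, write a $p$-power torsion group $A$ as the filtered colimit $A \simeq \colim_\alpha A_\alpha$ of its finitely generated subgroups. Each $A_\alpha$ is a finite abelian $p$-group and hence admits a composition series whose successive quotients are isomorphic to $\FF_p$. Iterating the corresponding cofiber sequences shows $HA_\alpha \in \ideal{\FF_p}$, and passing to the colimit gives $HA \in \ideal{\FF_p}$.

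The argument is essentially formal; the only subtlety worth flagging is that the Postnikov tower of $X$ is naturally a limit tower, which the colimit-closure of the ideal cannot directly exploit. Passing instead through the connective covers and writing $X$ as the filtered colimit of its bounded-above-and-below approximations $\tau_{\geq -n} X$ sidesteps this and reduces the problem to a finite iterated cofiber calculation at each $n$.
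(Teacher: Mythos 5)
Your proof is correct and follows essentially the same strategy as the paper: present a bounded above spectrum as a filtered colimit of its connective covers, build each bounded piece by finitely many Postnikov-type cofiber sequences from shifted Eilenberg--MacLane spectra, and reduce to putting $HA$ in the relevant ideal. The one small variation is at the last step for $\ideal{\ZZ}$: you resolve $HA$ by free abelian groups, hence by coproducts of $\ZZ$, whereas the paper notes that $HA$ carries a $\ZZ$-module structure and is therefore a retract of $\ZZ \otimes HA \in \ideal{\ZZ}$; both are equally quick, and your route has the minor bonus of only using closure under colimits rather than the full ideal property.
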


\begin{proof}
Using the Whitehead tower, we can present a bounded above spectrum as a filtered colimit of bounded spectra. Then, every bounded spectrum is a finite extension of Eilenberg-MacLane spectra, which admit $\ZZ$-module structures and therefore belong to $\ideal{\ZZ}$. If all the homotopy groups are $p$-power torsion, then the Eilenberg-MacLane spectra that appear in this process are all built from extensions out of $\FF_p$-modules so we further get $X\in \ideal{\FF_p}$.   
\end{proof}

\Cref{coacylcic_vs_bounded_above} above shows that $L_{\ideal{\ZZ}}$ depends only on arbitrary connected covers. 
\begin{prop}\label{Z_coloc_connective_cover}
Let $X$ be a spectrum. The $m$-th connective cover map $\tau_{\ge m}X\to X$ induces an isomorphism 
\[
L_{\ideal{\ZZ}}(\tau_{\ge m} X)\iso L_{\ideal{\ZZ}}(X).
\]
If $\tau_{\le m-1}X$ has $p$-power torsion homotopy groups, then we also have
\[
L_{\ideal{\FF_p}}(\tau_{\ge m} X)\iso L_{\ideal{\FF_p}}(X).
\]
\end{prop}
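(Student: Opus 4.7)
The plan is to exploit the fact that $L_{\ideal{\ZZ}}$, being the composition of a left adjoint symmetric monoidal localization with its fully faithful right adjoint, is an exact functor on $\Sp$ that annihilates everything in $\ideal{\ZZ}$. Thus, for any cofiber sequence, the third term becoming zero under $L_{\ideal{\ZZ}}$ forces the other map to become an isomorphism.

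Concretely, I would start from the standard cofiber sequence
\[
\tau_{\ge m}X \to X \to \tau_{\le m-1}X
\]
in $\Sp$. The spectrum $\tau_{\le m-1}X$ is bounded above, so by the first half of \Cref{coacylcic_vs_bounded_above} it lies in $\ideal{\ZZ}$. Applying the exact functor $L_{\ideal{\ZZ}}$ to the cofiber sequence therefore gives an equivalence $L_{\ideal{\ZZ}}(\tau_{\ge m}X) \iso L_{\ideal{\ZZ}}(X)$, as required.

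For the second assertion, the same argument applies verbatim once we verify that $\tau_{\le m-1}X \in \ideal{\FF_p}$. But this is exactly the hypothesis combined with the second half of \Cref{coacylcic_vs_bounded_above}: if $\tau_{\le m-1}X$ is bounded above with $p$-power torsion homotopy groups, then it is $\ideal{\FF_p}$-acyclic, and applying $L_{\ideal{\FF_p}}$ to the same cofiber sequence yields the desired isomorphism.

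There is no real obstacle here; the statement is essentially a direct corollary of \Cref{coacylcic_vs_bounded_above} packaged through the exactness of $L_I$. The only minor subtlety worth flagging in the write-up is that one must invoke that $L_I\colon \Sp \to \Sp$ is exact (it is the composition of a stable left adjoint with a stable fully faithful right adjoint), so that the cofiber sequence above maps to a cofiber sequence in which the right-hand term vanishes.
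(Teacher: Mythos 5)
Your proposal is correct and follows essentially the same reasoning as the paper: both arguments observe that the third term of the (co)fiber sequence relating $\tau_{\ge m}X$ and $X$ is bounded above (and $p$-power torsion in the second case), invoke \Cref{coacylcic_vs_bounded_above} to place it in the relevant ideal, and conclude via exactness of the localization functor. The paper phrases this via the fiber $\Sigma^{-1}\tau_{\le m-1}X$ while you phrase it via the cofiber $\tau_{\le m-1}X$, which is an immaterial difference.
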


\begin{proof}
The fiber of the map $\tau_{\ge m}X\to X$ is bounded above and hence belongs to the kernel of $L_{\ideal{\ZZ}}$ by the first part of \Cref{coacylcic_vs_bounded_above}. If $\tau_{\le m-1}X$ has only $p$-power torsion homotopy groups, then this fiber is in the kernel of $L_{\ideal{\FF_p}}$ as well by the second part of \Cref{coacylcic_vs_bounded_above}.
\end{proof}



\begin{prop}\label{connective_cover_ff}
The connective cover functor $\tau_{\ge 0}\colon L_{\ideal{\ZZ}}\Sp \to \Sp^\cn$ is a fully faithful right adjoint, with left adjoint given by the restriction of $L_{\ideal{\ZZ}}$ to connective spectra.
\end{prop}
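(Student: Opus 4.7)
The plan is to establish the adjunction abstractly by composing the two adjunctions already in play, and then to verify fully faithfulness by checking that the counit is invertible, for which \Cref{Z_coloc_connective_cover} does all the work.

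First, I would set up the adjunction. The inclusion $\Sp^\cn \hookrightarrow \Sp$ is left adjoint to $\tau_{\ge 0}\colon \Sp \to \Sp^\cn$, and the localization $L_{\ideal{\ZZ}}\colon \Sp \to L_{\ideal{\ZZ}}\Sp$ is left adjoint to the fully faithful inclusion $L_{\ideal{\ZZ}}\Sp \hookrightarrow \Sp$. For $X \in \Sp^\cn$ and $Y \in L_{\ideal{\ZZ}}\Sp$, composing these two adjunctions gives
\[
\Map_{\Sp^\cn}(X,\tau_{\ge 0} Y) \simeq \Map_{\Sp}(X,Y) \simeq \Map_{L_{\ideal{\ZZ}}\Sp}(L_{\ideal{\ZZ}}X, Y),
\]
so the restriction of $L_{\ideal{\ZZ}}$ to connective spectra is left adjoint to the restriction of $\tau_{\ge 0}$ to $L_{\ideal{\ZZ}}\Sp$. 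The targets make sense because $\tau_{\ge 0}$ lands in $\Sp^\cn$ by construction, and $L_{\ideal{\ZZ}}$ lands in $L_{\ideal{\ZZ}}\Sp$ by definition.

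It then remains to check that the right adjoint is fully faithful, equivalently that the counit of this adjunction is an equivalence. The counit at $Y \in L_{\ideal{\ZZ}}\Sp$ is the map
\[
L_{\ideal{\ZZ}}(\tau_{\ge 0} Y) \to Y
\]
induced by the connective cover map $\tau_{\ge 0} Y \to Y$ after $L_{\ideal{\ZZ}}$-localization. But \Cref{Z_coloc_connective_cover} with $m=0$ says precisely that $L_{\ideal{\ZZ}}(\tau_{\ge 0} Y) \iso L_{\ideal{\ZZ}}(Y)$, and since $Y$ is already $\ideal{\ZZ}$-local, the target is $Y$ itself. Hence the counit is an equivalence, and the right adjoint $\tau_{\ge 0}\colon L_{\ideal{\ZZ}}\Sp \to \Sp^\cn$ is fully faithful, completing the proof.

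There is no real obstacle here; everything has been prepared by \Cref{coacylcic_vs_bounded_above} and \Cref{Z_coloc_connective_cover}. The only minor point worth stating explicitly is that one must use that bounded above fibers lie in $\ideal{\ZZ}$ (which is what makes \Cref{Z_coloc_connective_cover} work) in order to identify the counit with an equivalence.
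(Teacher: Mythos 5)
Your proof is correct and takes essentially the same approach as the paper: both compose the two standard adjunctions to produce the claimed adjoint pair, and both reduce fully faithfulness to \Cref{Z_coloc_connective_cover}. The only cosmetic difference is that you verify the counit is an equivalence directly, whereas the paper runs the equivalent check at the level of mapping spaces.
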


\begin{proof}
The fact that it is a right adjoint with the claimed left adjoint follows by composing the adjunctions 
\[
L_{\ideal{\ZZ}}\Sp \simeq \Sp/\ideal{\ZZ} \adj \Sp \adj \Sp^\cn. 
\]
To show that $\tau_{\ge 0}$ is fully faithful on $L_{\ideal{\ZZ}}\Sp$, note that for $X,Y \in L_{\ideal{\ZZ}}\Sp$, by \Cref{Z_coloc_connective_cover}:
\[
\Map(X,Y)\simeq \Map(L_{\ideal{\ZZ}}\tau_{\ge 0}X,Y) \simeq \Map(\tau_{\ge 0}X,Y) \simeq \Map(\tau_{\ge 0}X,\tau_{\ge 0}Y) 
\]
\end{proof}

As a consequence of the proposition above, we can describe the essential image of the functor $\tau_{\ge 0}\colon L_{\ideal{\ZZ}}\colon \Sp \into \Sp^\cn$ as those connective spectra $X$ for which $X\iso \tau_{\ge 0}L_{\ideal{\ZZ}} X$. There is a more explicit description of these connective spectra:

\begin{prop} \label{connective_nonconnective_Z_coacyclic}
The following conditions for a connective spectrum $X$ are equivalent.
\begin{enumerate}
\item $X\iso \tau_{\ge 0}L_{\ideal{\ZZ}} X$. 
\item $\tau_{\ge -1} \hom(\ZZ,X) = 0$. 
\item There is a connective spectrum $Y$ such that $\Map(\ZZ,Y)=\pt$ and 
$
\tau_{\ge 0}\Sigma^{-1} Y\simeq X.
$ 
\end{enumerate}

\end{prop}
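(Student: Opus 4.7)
The plan is to establish the cycle $(1)\Rightarrow(2)\Rightarrow(3)\Rightarrow(1)$. The first two implications are quick; the bulk of the work is in $(3)\Rightarrow(1)$.

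For $(1)\Rightarrow(2)$, set $L := L_{\ideal{\ZZ}}X$, so by hypothesis the Postnikov fiber sequence of $L$ takes the form $X\to L\to \tau_{\le -1}L$. Applying $\hom(\ZZ,-)$ and using $\hom(\ZZ,L)=0$ (since $L$ is $\ideal{\ZZ}$-local), one obtains $\hom(\ZZ,X)\simeq \Omega\,\hom(\ZZ,\tau_{\le -1}L)$. Because $\ZZ$ is connective and $\tau_{\le -1}L\in\Sp_{\le -1}$, the t-structure axiom $\Map(\Sp_{\ge 0},\Sp_{\le -1})=\pt$ upgrades to $\hom(\ZZ,\tau_{\le -1}L)\in\Sp_{\le -1}$, and hence $\hom(\ZZ,X)\in\Sp_{\le -2}$, which is exactly (2). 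For $(2)\Rightarrow(3)$, take $Y:=\Sigma X$: it is $1$-connective, satisfies $\tau_{\ge 0}\Sigma^{-1}Y=X$, and has $\hom(\ZZ,Y)=\Sigma\,\hom(\ZZ,X)\in\Sp_{\le -1}$, forcing $\Map(\ZZ,Y)=\pt$.

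For $(3)\Rightarrow(1)$, the Postnikov fiber sequence $X\to \Sigma^{-1}Y\to \Sigma^{-1}H\pi_0 Y$ has bounded-above third term, hence in $\ideal{\ZZ}$ by \Cref{coacylcic_vs_bounded_above}, so $L_{\ideal{\ZZ}}X\simeq L_{\ideal{\ZZ}}\Sigma^{-1}Y=:L$. Setting $F:=\fib(\Sigma^{-1}Y\to L)\in\ideal{\ZZ}$ and using $\hom(\ZZ,L)=0$ gives $\hom(\ZZ,F)\simeq \Sigma^{-1}\hom(\ZZ,Y)\in\Sp_{\le -2}$, by the hypothesis $\Map(\ZZ,Y)=\pt$. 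To deduce that $F$ itself lies in $\Sp_{\le -2}$, I will use the identification $\ideal{\ZZ}=\{\ZZ\otimes B : B\in\Sp\}$: the right-hand side already contains $\ZZ=\ZZ\otimes\Sph$ and is closed under colimits (since $\colim(\ZZ\otimes B_i)\simeq \ZZ\otimes \colim B_i$) and tensoring, hence contains the ideal, while the reverse inclusion is by closure of $\ideal{\ZZ}$ under tensoring with $\ZZ$. Writing $F\simeq \ZZ\otimes B$ then endows $F$ with a $\ZZ$-module structure via multiplication on $\ZZ$, and its adjoint $F\to \hom(\ZZ,F)$ is split by the evaluation $\hom(\ZZ,F)\to \hom(\Sph,F)=F$ along the unit $\Sph\to\ZZ$ (unit axiom). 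Thus $F$ is a retract of $\hom(\ZZ,F)$ in $\Sp$, so $\pi_n F$ is a retract of $\pi_n\hom(\ZZ,F)=0$ for $n\ge -1$. Consequently $F\in\Sp_{\le -2}$, $\tau_{\ge 0}F=0$, and therefore $X\simeq \tau_{\ge 0}\Sigma^{-1}Y\simeq \tau_{\ge 0}L\simeq \tau_{\ge 0}L_{\ideal{\ZZ}}X$.

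The main obstacle is the retract argument in this last step: it rests on the structural identification of $\ideal{\ZZ}$ with the essential image of $\ZZ\otimes(-)$, which (after choosing a presentation $F\simeq \ZZ\otimes B$) endows each object with a $\ZZ$-module structure and hence yields the splitting used to control $\pi_*F$.
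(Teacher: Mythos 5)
Your implications $(1)\Rightarrow(2)$ and $(2)\Rightarrow(3)$ are correct and essentially match the easy part of the paper's proof. The gap is in $(3)\Rightarrow(1)$, specifically in the assertion that
\[
\ideal{\ZZ}=\{\ZZ\otimes B : B\in\Sp\}.
\]
Your argument that the right-hand side is closed under colimits does not work: if $A_\bullet\colon I\to\Sp$ is a diagram whose vertices each happen to be equivalent to some $\ZZ\otimes B_i$, the transition maps $\ZZ\otimes B_i\to\ZZ\otimes B_j$ need not be $\ZZ$-linear, so the diagram does not in general lift along $\ZZ\otimes(-)$, and one cannot conclude $\colim A_\bullet\simeq\ZZ\otimes\colim B_i$. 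Indeed the identification is false. Every spectrum of the form $\ZZ\otimes B$ is a $\ZZ$-module, and since $\ZZ$ has global dimension one, every $\ZZ$-module spectrum is formal, i.e.\ a direct sum of shifted Eilenberg--MacLane spectra. But $\ideal{\ZZ}$ is closed under cofibers and so contains non-formal spectra: for example at $p=2$, $\FF_2\simeq \ZZ\otimes(\Sph/2)\in\ideal{\ZZ}$, hence so does the cofiber $C$ of $\mathrm{Sq}^2\colon\FF_2\to\Sigma^2\FF_2$; if $C$ were a sum of shifted Eilenberg--MacLane spectra it would be $\Sigma\FF_2\oplus\Sigma^2\FF_2$, which would split the inclusion $\Sigma^2\FF_2\to C$ and force $\mathrm{Sq}^2$ to be null. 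Thus $\ideal{\ZZ}$ strictly contains the essential image of $\ZZ\otimes(-)$.

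Consequently your fiber $F=\Fib(\Sigma^{-1}Y\to L)$, though it lies in $\ideal{\ZZ}$, need not carry a $\ZZ$-module structure; the retraction $F\to\hom(\ZZ,F)\to F$ is not available, and the conclusion $F\in\Sp_{\le -2}$ (and hence $\tau_{\ge 0}\Sigma^{-1}Y\simeq\tau_{\ge 0}L$) is unjustified. This step is where the real content of the proposition lies: deducing coconnectivity of the colocalization fiber from coconnectivity of $\hom(\ZZ,-)$ is essentially equivalent to the hard implication $(2)\Rightarrow(1)$, which the paper proves by an explicit iteration. It sets $X_0=X$ and $X_{k+1}=\Cofib(\hom(\ZZ,X_k)\to X_k)$, uses the $2$-connectivity of the cofiber of the unit $\ZZ\to\ZZ\otimes\ZZ$ to show inductively that $\tau_{\ge -k-1}\hom(\ZZ,X_k)=0$, and concludes that $X_\infty=\colim_k X_k$ is $L_{\ideal{\ZZ}}$-local with $\tau_{\ge 0}X_\infty\simeq X$. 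Some such quantitative argument appears unavoidable; your route, if completed, would have to prove the very statement it is trying to bypass.
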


\begin{proof}
\underline{$(2)\iff(3)$}: If $\tau_{\ge -1}\hom(\ZZ,X)= 0$ we can choose $Y=\Sigma X$. Conversely, if there is such $Y$ then we have a fiber sequence 
\[
\Sigma^{-2}\pi_0 Y \to X \to \Sigma^{-1} Y 
\]
Applying $\hom(\ZZ,-)$ we obtain a fiber sequence 
\[
\Sigma^{-2}\hom(\ZZ,\pi_0 Y) \to \hom(\ZZ,X) \to \Sigma^{-1}\hom(\ZZ,Y).   
\]
Now, $\tau_{\ge -1}\Sigma^{-1}\hom(\ZZ,Y) \simeq 0$ by our assumption on $Y$, and $\tau_{\ge -1} \Sigma^{-2}\hom(\ZZ,\pi_0Y)=0$ since $\pi_0Y$ is discrete. Together, these imply that $\tau_{\ge -1}\hom(\ZZ,X) =0$ as well.

\underline{$(1)\implies (3)$}: If $\tau_{\ge 0}L_{\ideal{\ZZ}}X$ then we can choose $Y= \tau_{\ge 0} \Sigma L_{\ideal{\ZZ}}$. Indeed, this spectrum has has a contractible space of maps from $\ZZ$ since its the connective cover of an object of $L_{\ideal{\ZZ}}\Sp$.

\underline{$(2)\implies (1)$}:
Define a sequence of spectra iteratively as follows: Set $X_0 = X$ and $X_{k+1}$ to be the cofiber 
\[
\hom(\ZZ,X_k) \oto{\ev} X_k \too X_{k+1}.
\]
We shall show inductively that $\tau_{\ge -k-1}\hom(\ZZ,X_k)=0$. Indeed, let $C$ be the cofiber of the left unit map $\ZZ \to \ZZ\otimes \ZZ$. Since $\pi_1\ZZ\otimes \ZZ = 0$ and $ \pi_0 \ZZ \to \pi_0\ZZ\otimes \ZZ$ is injective, the $\ZZ$-module spectrum $C$ is $2$-connective. Applying $\hom(\ZZ,-)$ to the cofiber sequence defining $X_{k+1}$, and using  canonical identification $\hom(\ZZ,\hom(\ZZ,-))\simeq \hom(\ZZ\otimes \ZZ,-)$ we get a fiber sequence 
\[
\hom(\ZZ\otimes \ZZ,X_k)
\too \hom(\ZZ,X_{k})
 \too \hom(\ZZ,X_{k+1}), 
\]
from which we deduce that
\[
\hom(\ZZ,X_{k+1})\simeq \Sigma\hom(C,X_k)\simeq \Sigma\hom_\ZZ(C,\hom(\ZZ,X_k))\simeq \hom_\ZZ(C,\Sigma\hom(\ZZ,X_k)) 
\]
Since, by the inductive hypothesis, $\tau_{\ge -k}\Sigma\hom(\ZZ,X_k) = 0$ and $\tau_{\le 1}C =0$, we deduce that  
\[
\tau_{\ge -k-2}\hom(\ZZ,X_{k+1}) \simeq \tau_{\ge -k-2}\hom_\ZZ(C,\Sigma\hom(\ZZ,X_k)) = 0,
\]
hence verifying the inductive step. 

We now use this estimate to show that $\tau_{\ge 0}X_\infty = X$ and that $X_\infty \in L_{\ideal{\ZZ}} \Sp$. Note that these suffices to show that $X_\infty = L_{\ideal{\ZZ}}X$ by \Cref{connective_nonconnective_Z_coacyclic}. 
For the first claim, note that we have $\tau_{\ge 0}X_k\iso\tau_{\ge 0}X_{k+1}$ for all $k\in \NN$. Indeed, this follows from the fact that the fiber of the map $X_k\to X_{k+1}$ has vanishing homotopy groups in degrees $-1$ and above. Finally, we show that $X_\infty \in L_{\ideal{\ZZ}}\Sp$, or equivalently that $\hom(\ZZ,X_\infty)=0$.  By the fact that $\tau_{\ge -k-1}\hom(\ZZ,X_k)=0$, it would suffice to show that the assembly map 
\[
\colim\hom(\ZZ,X_k)\to \hom(\ZZ,\colim X_k)
\]
is an isomorphism. This assembly map participates in a cofiber sequence of assembly maps 
\[
\xymatrix{
 \colim\hom(\ZZ,\tau_{\ge 0}X_k)\ar[r]  \ar[d]&  \colim\hom(\ZZ,X_k)\ar[r] \ar[d] & \colim\hom(\ZZ,\tau_{\le -1}X_k)\ar[d] \\ 
 \hom(\ZZ,\colim\tau_{\ge 0}X_k)\ar[r] &  \hom(\ZZ,\colim X_k)\ar[r] & \hom(\ZZ,\colim\tau_{\le -1}X_k).
}
\]
The left vertical map is an isomorphism since the sequence $\tau_{\ge 0}X_0 \to \tau_{\ge 0}X_1 \to \dots$ is constant, by our proof of the first claim. The right vertical map is an isomorphism since $\ZZ$ is a spectrum of finite type, and hence $\hom(\ZZ,-)$ preserves filtered colimits of coconnective spectra. This shows that the middle vertical map is an isomorphism, concluding the proof. 
\end{proof}

\section{Chromatic Ideals}
 For $n\ge 1$ let\footnote{The notation $F(n)$ for a finite spectrum of type $n$ is non-standard. Note, for example, that if a Smith-Toda complex $V(n)$ exists then we can choose  $F(n+1):=V(n)$.} 
$F(n)$ be a finite spectrum of type $n$ at an (implicit) prime $p$. We also set $F(0):= \Sph$. Then, the quotient 
\[
\Sp/\ideal{F(n)} \simeq L_{n-1}^f\Sp
\] 
is the $(n-1)$-st finite localization, and the corresponding localization functor is denoted $L_{n-1}^f$. By the thick subcategory theorem of Hopkins and Smith \cite{nilp2}, after localizing at the prime $p$ the chains of ideals 
\[
\Sp = \ideal{F(0)} \supseteq \ideal{F(1)} \supseteq \ideal{F(2)}\supseteq \dots \supseteq \{0\}
\]
contain all the compactly generated localizing ideals in $\Sp_{(p)}$. 

Let $v_n\colon \Sigma^kF(n) \to F(n)$ be a $v_n$-self map, and let $T(n):=v_n^{-1}F(n)$ be the associated telescope. We define also $T(\infty):= \FF_p$. 
Using \Cref{principal_ideal_splits} iteratively we get, for $n\ge m$:
\begin{equation}\label{eq:split_chrom_ideals}
\ideal{F(m)} = \ideal{T(m),\dots,T(n),F(n+1)}. 
\end{equation}
Taking $m=0$ and applying $L_n^f$-localization, which precisely annihilates $F(n+1)$, we get that
\begin{equation}\label{eq:split_Lmf}
\ideal{T(0),\dots,T(n)} = L_n^f\Sp. 
\end{equation}

Motivated by this example, we consider the following localizing ideal.

\begin{defn}
We define the localizing ideal
\[
\mdef{\Spch} := \ideal{T(0),T(1),\dots,T(\infty)}\unlhd\Sp.
\]
\end{defn}
Thus, informally, $\Spch$ consists of those spectra which can be built from the ``chromatic cells'' $T(i)$ for $i=0,\dots.\infty$. 
\begin{rem}\label{rem:chromatic_modules}
Note that $\Spch$ is closed under all colimits in $\Sp$. In particular, if $R$ is an $\EE_1$-ring spectrum in $\Spch$ then so does every $R$-module spectrum. 
\end{rem}

We have the following source of examples of objects in $\Spch$.
\begin{defn} [{{cf. \cite[Definition 7.13]{ChromaticFourier}}}]
A spectrum $X$ is called \tdef{almost $L_n^f$-local} if $X\otimes F(n+1)$ is bounded above.  
\end{defn}

\begin{prop}\label{almost_local_chromatic}
If $X$ is almost $L_n^f$-local then $X\in \Spch$.
\end{prop}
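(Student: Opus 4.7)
The plan is to split $X$ via the cofiber sequence of the finite chromatic localization,
\[
C_n^f X \to X \to L_n^f X,
\]
and show that both the fiber and the quotient lie in $\Spch$. The quotient is immediate from \eqref{eq:split_Lmf}: $L_n^f\Sp = \ideal{T(0),\dots,T(n)}\subseteq\Spch$, so $L_n^f X\in \Spch$ without using the hypothesis.

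For the fiber, the hypothesis enters in two stages. First I would show $X\otimes F(n+1)\in \Spch$. Since $F(n+1)$ is a finite $p$-local spectrum of type $\ge 1$, a standard argument gives $p^M\cdot\id_{F(n+1)} = 0$ for some $M$ (the identity is $p$-power torsion in $\pi_0\End(F(n+1))$), so $\pi_*(X\otimes F(n+1))$ is $p$-power torsion. Combined with the assumption that $X\otimes F(n+1)$ is bounded above, the second statement of \Cref{coacylcic_vs_bounded_above} places it in $\ideal{\FF_p} = \ideal{T(\infty)}\subseteq\Spch$. Next, using that $L_n^f$ is smashing, $C_n^f X\simeq C\otimes X$ where $C:=\fib(\Sph\to L_n^f\Sph)\in\ideal{F(n+1)}$. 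The full subcategory $\{A\in\Sp : A\otimes X\in\Spch\}$ is a localizing ideal of $\Sp$ containing $F(n+1)$ by the previous step, hence it contains all of $\ideal{F(n+1)}$ and in particular $C$; so $C_n^f X\in\Spch$. Closure of $\Spch$ under extensions then forces $X\in\Spch$.

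I do not anticipate any serious obstacles. The one technical point to be careful about is the $p$-power torsion claim on $\pi_*(X\otimes F(n+1))$, which follows from $p$ being nilpotent as a self-map of $F(n+1)$. Everything else is a formal manipulation of localizing ideals, using only the splitting \eqref{eq:split_Lmf} and the smashing property of $L_n^f$ that are already in hand.
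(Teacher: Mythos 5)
Your proof is correct, and it isolates exactly the same key step as the paper (placing $X\otimes F(n+1)$ in $\ideal{\FF_p}\subseteq\Spch$ via bounded-above plus $p$-power torsion), but the surrounding formal argument differs. You invoke the smashing property of $L_n^f$ to split $X$ along the localization cofiber sequence $C\otimes X\to X\to L_n^fX$ with $C=\fib(\Sph\to L_n^f\Sph)\in\ideal{F(n+1)}$, then transport $F(n+1)\in\{A:A\otimes X\in\Spch\}$ through the localizing ideal generated by $F(n+1)$ to reach $C$. The paper instead tensors the ideal identity $\Sp=\ideal{T(0),\dots,T(n),F(n+1)}$ (\eqref{eq:split_chrom_ideals} with $m=0$) directly with $X$, obtaining $\ideal{X}\subseteq\sum_k\ideal{T(k)}+\ideal{X\otimes F(n+1)}$ without appealing to smashing at all. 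The paper's version is marginally more self-contained (it only reuses the ideal-splitting lemma already in hand), whereas yours brings in the smashing theorem as an external input; in exchange, yours makes the geometry of the localization cofiber sequence visible and avoids re-deriving the decomposition $\ideal{X}\subseteq\ldots$. Both are equally rigorous, and your auxiliary observations (nilpotence of $p$ on $F(n+1)$ giving the $p$-power torsion, and closure of $\Spch$ under extensions) are exactly what is needed.
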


\begin{proof}
Tensoring \Cref{eq:split_chrom_ideals} for $m=0$ with $X$ we get 
\[
\ideal{X} \subseteq \sum_{k=0}^n\ideal{X\otimes T(k)} + \ideal{X\otimes F(n+1)}\subseteq \sum_{k=0}^n\ideal{T(k)} + \ideal{X\otimes F(n+1)} 
\]
Since $\ideal{T(k)}\subseteq \Spch$, it remains to show that the same holds for $\ideal{X\otimes F(n+1)}$. But $X\otimes F(n+1)$ is bounded above by assumption, and all its homotopy groups are $p$-power torsion since $p$ is nilpotent on $F(n+1)$. By \Cref{coacylcic_vs_bounded_above}, these imply that 
\[
X\otimes F(n+1) \in \ideal{\FF_p}\subseteq \Spch.
\]
\end{proof}

By construction, the inclusion $L_n^f\Sp \into \Sp$ lands in $\Spch$, and we can restrict $L_n^f$ to a localization $L_n^f\colon \Spch \to L_n^f\Sp$. 
Moreover, 
the kernel of the localization $L_n^f$ simplifies after this restriction of its source.

\begin{prop}\label{kernel_Lnf_L_infty}
The kernel of $L_n^f\colon \Spch \to L_n^f\Sp$ is the localizing ideal $\ideal{T(n+1),\dots,T(\infty)}$ of $\Sp$. Namely, 
\[
\Spch \cap \ideal{F(n+1)} = \ideal{T(n+1),\dots,T(\infty)}.
\]
\end{prop}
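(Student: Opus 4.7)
The plan has two parts, corresponding to the two containments. The inclusion $\ideal{T(n+1),\dots,T(\infty)} \subseteq \Spch \cap \ideal{F(n+1)}$ is routine: it suffices to check $T(k)\in\ideal{F(n+1)}$ for $k\geq n+1$ and for $k=\infty$. For finite $k\geq n+1$, the thick subcategory theorem places $F(k)$ in $\ideal{F(n+1)}$, and $T(k)=v_k^{-1}F(k)$ is a filtered colimit of suspensions of $F(k)$. For $k=\infty$, the smash product $\FF_p\otimes F(n+1)$ is a nonzero finite $\FF_p$-module spectrum, so it splits as a wedge of suspensions of $\FF_p$, exhibiting $\FF_p$ as a retract of an object of $\ideal{F(n+1)}$.

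The reverse inclusion is the substantive direction, and my plan is a smashing-functor argument that reduces everything to a check on the generators $T(k)$. The key point is that $L_n^f$ is a smashing localization, so the fiber functor $M:=\mathrm{fib}(\id\to L_n^f)$ is itself smashing, given by tensoring with the idempotent spectrum $\mathrm{fib}(\Sph\to L_n^f\Sph)$. Consequently $M$ commutes with all small colimits and with tensoring by arbitrary spectra, and therefore the collection
\[
\mathcal{C}:=\{Y\in\Sp : MY\in\ideal{T(n+1),\dots,T(\infty)}\}
\]
is itself a localizing ideal of $\Sp$. It contains every generator of $\Spch$: for $k\leq n$, equation \eqref{eq:split_Lmf} gives $T(k)\in L_n^f\Sp$, so $MT(k)=0$; for $k\geq n+1$ or $k=\infty$ the first containment gives $L_n^f T(k)=0$, so $MT(k)=T(k)$ lies tautologically in $\ideal{T(n+1),\dots,T(\infty)}$. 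Hence $\Spch\subseteq\mathcal{C}$.

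To conclude, take $X\in\Spch\cap\ideal{F(n+1)}$. Since $X\in\ideal{F(n+1)}=\ker L_n^f$ we have $L_n^f X=0$, so $X=MX$; combined with $X\in\Spch\subseteq\mathcal{C}$, this yields $X\in\ideal{T(n+1),\dots,T(\infty)}$. The only point requiring care is the smashing property of $L_n^f$ and its transfer to $M$, without which $\mathcal{C}$ would not be closed under the operations generating a localizing ideal; once that is granted, the verification on the generators $T(k)$ is immediate from the earlier splittings together with \eqref{eq:split_Lmf}.
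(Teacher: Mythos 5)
Your proof is correct, and it takes a genuinely different route from the paper's. The paper argues by ideal arithmetic: it observes that $\Spch+\ideal{F(n+1)}=\Sp$, then writes
\[
\ideal{F(n+1)}\cap\Spch = (\ideal{F(n+1)}\cap\Spch)\cdot(\Spch+\ideal{F(n+1)}) \subseteq \ideal{F(n+1)}\cdot\Spch = \sum_{m\geq 0}\ideal{F(n+1)\otimes T(m)},
\]
and then identifies each summand as $0$ (for $m\le n$) or $\ideal{T(m)}$ (for $m\ge n+1$, using the thick subcategory theorem to compare telescopes). You instead invoke the smashing property of the finite localization $L_n^f$ and use the fiber functor $M=\mathrm{fib}(\id\to L_n^f)$ as a colimit- and tensor-commuting endofunctor to reduce the claim to a check on the generators $T(k)$. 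At bottom the two proofs rely on the same facts — that $T(k)$ for $k\le n$ is $L_n^f$-local and that $T(k)$ for $k\ge n+1$ (including $\infty$) lies in $\ker L_n^f$ — but the organization is different. The paper's version is purely categorical-algebraic and avoids explicitly naming the smashing property (it is implicit in the identity $\ideal{F(n+1)}\cdot\ideal{T(m)}=\ideal{F(n+1)\otimes T(m)}$ and the vanishing $F(n+1)\otimes T(m)=0$ for $m\le n$). Your version makes the smashing input explicit and is arguably more transparent about why the check on generators suffices. One small remark: you assert that $\mathcal{C}$ is a localizing ideal in the paper's sense, which formally includes presentability; for the argument you only need $\mathcal{C}$ to be closed under colimits and tensoring with arbitrary objects, which is what $M$ being smashing gives you, so presentability is not actually required. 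Also note that the paper's proof must still, in effect, check $T(\infty)=\FF_p\in\ideal{F(n+1)}$ when applying the thick-subcategory comparison, so the content of your $k=\infty$ retract argument is present in both.
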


\begin{proof}

Clearly, the right-hand side is contained in the left-hand side. On the other hand, 
note that $\Spch + \ideal{F(n+1)} = \Sp$ and hence:
\begin{align*}
\ideal{F(n+1)} \cap \Spch &= (\ideal{F(n+1)} \cap \Spch)\cdot(\Spch + \ideal{F(n+1)}) \\ 
&\subseteq \ideal{F(n+1)} \cdot \Spch = \sum_{m=0}^\infty \ideal{F(n+1)}\cdot \ideal{T(m)} = \sum_{m=0}^\infty \ideal{F(n+1)\otimes T(m)}.    
\end{align*}
Finally, if $m<n+1$ then $F(n+1)\otimes T(m) =0$ while if $m\ge n+1$ then $T(m)$ and $T(m)\otimes F(n+1)$ are both telescopes on $v_m$-self maps of spectra of type $m$ and hence generate the same thick subcategory, by the thick subcategory theorem. Hence, 
\[
\ideal{F(n+1)\otimes T(m)} = \begin{cases} 
\ideal{T(m)} & m\ge n+1 \\ 
0 & m\le n.
\end{cases}
\]
and thus 
\[
\ideal{F(n+1)} \cap \Spch \subseteq \sum_{m=0}^\infty \ideal{F(n+1)\otimes T(m)} = \sum_{m=n+1}^\infty \ideal{T(m)}.
\]
\end{proof}

\section{Ideals of Eilenberg-MacLane Spaces}
For a pointed space $A$, let 
$
\rsus{A}:= \Sigma^\infty A
$ 
be the reduced suspension spectrum of $A$. We denote $\mdef{\ideal{A}}:=\ideal{\rsus{A}}$. 
Let $B^nC_p$ be the Eilenberg-MacLane space with 
\[
\pi_iB^nC_p = \begin{cases}
C_p & i=n \\ 
0 & \text{else}.
\end{cases}
\]
Note that for $n\ge 1$ the spectrum $\rsus{B^nC_p}$ is $p$-local. 

Our goal from now on is to compare the ideals $\ideal{B^nC_p}$ with the compactly generated ideals $\ideal{F(n)}$. First, we show that like the latter, the former family form a descending chain.
\begin{prop}\label{EM_ideals_chain}
The ideals $\ideal{B^nC_p}$ satisfy 
\begin{equation}\label{eq:chain_EM_ideals}
\Sp = \ideal{B^0 C_p} \supseteq  \ideal{B^1C_p} \supseteq \ideal{B^2C_p}\dots \supseteq \ideal{B^nC_p}\supseteq \dots. 
\end{equation}

\end{prop}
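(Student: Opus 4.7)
The goal is to establish the inclusions $\ideal{B^{n+1}C_p} \subseteq \ideal{B^nC_p}$ for every $n \ge 0$, which by the definition of the localizing ideal generated by a single object reduces to showing that $\rsus{B^{n+1}C_p}$ lies in $\ideal{B^nC_p}$. The guiding idea is that $B^{n+1}C_p$ is the classifying space of $B^nC_p$, and may therefore be built as a colimit out of finite Cartesian powers of $B^nC_p$.

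The plan is to use the bar construction. Since $B^nC_p$ is an abelian topological group (equivalently, a grouplike $\EE_\infty$-monoid in pointed spaces), we have
\[
B^{n+1}C_p \simeq |B_\bullet(B^nC_p)|,
\]
where $B_k(B^nC_p) = (B^nC_p)^{\times k}$, with basepoint the unit. Applying $\rsus = \Sigma^\infty$, which preserves colimits, presents $\rsus{B^{n+1}C_p}$ as a colimit over $\Delta^{\mathrm{op}}$ of the spectra $\rsus{(B^nC_p)^{\times k}}$.

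Next, I would invoke the standard splitting $\rsus{X \times Y} \simeq \rsus{X} \oplus \rsus{Y} \oplus (\rsus{X} \otimes \rsus{Y})$ for pointed spaces, arising from the cofiber sequence $X \vee Y \to X \times Y \to X \wedge Y$ together with the retractions onto each factor. Iterating, each term of the simplicial spectrum decomposes as
\[
\rsus{(B^nC_p)^{\times k}} \simeq \bigoplus_{\emptyset \ne S \subseteq \{1,\dots,k\}} \rsus{B^nC_p}^{\otimes |S|},
\]
while for $k = 0$ the term is just $\rsus{\pt} = 0$. Each nonzero summand is a tensor product involving at least one copy of $\rsus{B^nC_p}$, so it lies in $\ideal{B^nC_p}$ by closure of the ideal under tensoring with arbitrary objects of $\Sp$. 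Since $\ideal{B^nC_p}$ is also closed under colimits, the geometric realization of the simplicial spectrum is again in $\ideal{B^nC_p}$, giving the desired inclusion.

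I do not anticipate a serious obstacle; the only slightly delicate point is ensuring that the bar construction and the pointed product splitting are set up correctly in the pointed $\infty$-categorical context, but both are entirely standard.
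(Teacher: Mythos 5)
Your proposal is correct and takes essentially the same approach as the paper: both proofs use the bar construction to present $\rsus{B^nC_p}$ as a geometric realization of the simplicial spectrum $\rsus{(B^{n-1}C_p)^{\times \bullet}}$, then show each simplicial level lies in $\ideal{B^{n-1}C_p}$. The only (inessential) difference is in the second step: the paper argues by induction on $k$ using the cofiber sequence of relative suspension spectra for the triple $(\pt,(B^{n-1}C_p)^{k-1},(B^{n-1}C_p)^k)$, whereas you invoke the stable splitting $\rsus{X\times Y}\simeq\rsus{X}\oplus\rsus{Y}\oplus(\rsus{X}\otimes\rsus{Y})$ iterated to a direct sum decomposition of $\rsus{(B^{n-1}C_p)^{\times k}}$. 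Your decomposition is a slightly stronger statement implying the cofiber-sequence estimate, but both lead to exactly the same conclusion via closure of the ideal under tensoring and colimits.
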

\begin{proof}
First, recall that the reduced suspension spectrum functor $\rsus{-}\colon \Spc_\ast\to \Sp$ is colimit preserving. Now, the bar construction gives a colimit presentation 
\[
B^nC_p \simeq \colim_{[k]\in \Delta}(B^{n-1}C_p)^k \qin \Spc_\ast
\]
and hence 
\[
\rsus{B^nC_p} \simeq \colim_{[k]\in \Delta}\rsus{(B^{n-1}C_p)^k}.
\]
It will therefore suffices to show that $\rsus{(B^{n-1}C_p)^k}\in \ideal{B^{n-1}C_p}$ for all $k\in \NN$. For $k=1$ this is obvious, and for $k>1$ we can argue inductively using the cofiber sequences of relative suspension spectra 
\[
\rsus{B^{n-1}C_p}\too \rsus{(B^{n-1}C_p)^k}\too \sus{(B^{n-1}C_p)^{k-1}}\otimes \rsus{B^{n-1}C_p} 
\]
associated with the triple of spaces  $(\pt, (B^{n-1}C_p)^{k-1},(B^{n-1}C_p)^k)$.

\end{proof}
Next, we show that the chain of Eilenberg-MacLane ideals is majorized by the chain of ideals of finite spectra. 
\begin{prop}\label{EM_ideal_in_finite}
For all $n\in \NN$ there is an inclusion of ideals $\ideal{B^nC_p} \subseteq \ideal{F(n)}$. 
\end{prop}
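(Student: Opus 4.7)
The plan is to proceed by induction on $n$. The base case $n=0$ is immediate since $\ideal{F(0)}=\Sp$. For the inductive step, assume $\ideal{B^{n-1}C_p}\subseteq\ideal{F(n-1)}$; the goal reduces to showing that the single generator $\rsus{B^n C_p}$ lies in $\ideal{F(n)}$. By \Cref{EM_ideals_chain} and induction, $\rsus{B^n C_p}\in\ideal{B^{n-1}C_p}\subseteq\ideal{F(n-1)}$, so the task is to improve the containment by exactly one chromatic level.

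The key reduction is via \Cref{principal_ideal_splits} applied to $M=F(n-1)$ with a $v_{n-1}$-self map, yielding $\ideal{F(n-1)}=\ideal{F(n)}+\ideal{T(n-1)}$. Combined with the characterization of $\ideal{F(n)}$ as the intersection $\bigcap_{i=0}^{n-1}\ann(T(i))$ of telescope annihilators (which follows from smashingness of $L_{n-1}^f$ together with the description of its Bousfield class as $T(0)\vee\dots\vee T(n-1)$), and the observation that membership in $\ideal{F(n-1)}$ already handles the conditions for $T(0),\dots,T(n-2)$, the inductive step reduces to the single claim
\[
\rsus{B^n C_p}\otimes T(n-1)=0.
\]
For $n=1$ this is simply the vanishing of reduced rational homology of $BC_p$, which holds since $C_p$ is finite. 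For $n\ge 2$, the plan is to exploit the infinite loop space structure $B^n C_p=\Omega^\infty\Sigma^n HC_p$: the Goodwillie filtration of $\rsus{-}\circ\Omega^\infty$ builds $\rsus{B^n C_p}$ from layers of the form $(\Sigma^n HC_p)^{\otimes k}_{h\Sigma_k}$ for $k\ge 1$, each of which is a shifted homotopy orbit of an iterated tensor power of $HC_p$. Since $HC_p$ itself is $T(i)$-acyclic for every $i\ge 0$ (as the $v_i$-self map of any finite type-$i$ complex acts nilpotently on $\FF_p$-homology), every such layer vanishes after tensoring with $T(n-1)$.

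The hard part will be the convergence step, propagating this layer-wise vanishing up to $\rsus{B^n C_p}$ itself, since the tensor product need not commute with the inverse limit defining the Goodwillie tower. At heights $n-1\le 1$ one can bypass the tower entirely by invoking Ravenel--Wilson's calculation $K(n-1)_* B^n C_p=0$ together with the telescope conjecture at height at most one; at heights $n-1\ge 2$ the telescope conjecture is known to fail, so the argument must be carried out directly with the telescopic localization, plausibly via a Bousfield--Kuhn style identification of $L_{T(n-1)}\rsus{\Omega^\infty Y}$ with $L_{T(n-1)}Y$ for $Y$ connective with $\pi_0 Y=0$.
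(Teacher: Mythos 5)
Your inductive reduction is correct: combining the smashingness of $L_{n-1}^f$ with the telescopic description of its Bousfield class gives $\ideal{F(n)}=\bigcap_{i=0}^{n-1}\ann(T(i))$, and since by \Cref{EM_ideals_chain} and the inductive hypothesis one already has $\rsus{B^nC_p}\in\ideal{F(n-1)}$, the claim does reduce to the single vanishing $T(n-1)\otimes\rsus{B^nC_p}=0$. The paper reaches the same bottleneck more directly: $\ideal{F(n)}$ is, by the definition of finite localization, the kernel of $L_{n-1}^f$, so the inclusion is equivalent to $L_{n-1}^f\rsus{B^nC_p}=0$, and the paper simply cites \cite[Corollary 5.3.7]{TeleAmbi} for this vanishing; neither the induction nor the intersection characterization is needed.

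The genuine gap is exactly where you flag it. Each Goodwillie layer $\bigl((\Sigma^n\FF_p)^{\otimes k}\bigr)_{h\Sigma_k}$ of $\rsus{\Omega^\infty\Sigma^n\FF_p}$ does vanish after tensoring with $T(n-1)$, because $\FF_p\otimes T(n-1)=0$ (a $v_{n-1}$-self map is zero on $\FF_p$-homology). But $T(n-1)$ is unbounded below, so $-\otimes T(n-1)$ does not commute with the inverse limit over the tower, and layerwise vanishing does not formally propagate to $\rsus{B^nC_p}\otimes T(n-1)=0$. At heights $\le 1$ you can route around this via $K(n-1)_*B^nC_p=0$ (Ravenel--Wilson) and the known cases of the telescope conjecture, as you note; but for $n\ge 3$ the telescope conjecture fails, and the telescopic convergence of the Goodwillie tower (or the Bousfield--Kuhn-type comparison you gesture at) is itself a substantial theorem rather than a formality. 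In effect, the vanishing $T(n-1)\otimes\rsus{B^nC_p}=0$ is the entire content of \Cref{EM_ideal_in_finite}; your sketch correctly locates this input but does not supply it. The correct move is either to cite the result outright, as the paper does, or to state and invoke the telescopic convergence theorem with its precise hypotheses.
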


\begin{proof}
Since $\ideal{F(n)}$ is the kernel of the localization $L_{n-1}^f$, the statement of the proposition is equivalent to the vanishing of
$
L_{n-1}^f\rsus{B^nC_p}.
$
This, in turn, follows from \cite[Corollary 5.3.7]{TeleAmbi}. 
\end{proof}
However, the gap between $\ideal{B^nC_p}$ and $\ideal{F(n)}$ is unseen by chromatic homotopy theory. Concretely, the following result holds. 
\begin{prop}\label{Tm_in_BmCp}
For all $n\in \NN$, 
\[
\ideal{B^nC_p}\cap \Spch = \ideal{F(n)}\cap \Spch.
\]
\end{prop}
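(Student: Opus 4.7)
The inclusion $\ideal{B^n C_p} \cap \Spch \subseteq \ideal{F(n)} \cap \Spch$ is immediate from \Cref{EM_ideal_in_finite}, so the entire content lies in the reverse inclusion. My plan is to reduce the containment to a pointwise statement about generators and then feed in the chromatic cyclotomic machinery. First, I would rewrite the right-hand side using (a shift of) \Cref{kernel_Lnf_L_infty}, giving
\[
\ideal{F(n)} \cap \Spch = \ideal{T(n), T(n+1), \ldots, T(\infty)}.
\]
Hence it suffices to show $T(m) \in \ideal{B^n C_p}$ for each $m \in \{n, n+1, \ldots, \infty\}$. Using the descending chain \Cref{EM_ideals_chain}, $\ideal{B^m C_p} \subseteq \ideal{B^n C_p}$ whenever $m \ge n$, so the problem further reduces to the two uniform statements: $T(m) \in \ideal{B^m C_p}$ for each finite $m \ge 1$, and $\FF_p = T(\infty) \in \ideal{B^k C_p}$ for some (hence every) $k \ge 1$.

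For the finite-height statement $T(m) \in \ideal{B^m C_p}$, I would invoke the chromatic cyclotomic extensions of \cite{carmeli2021chromatic}, as advertised in the outline. The heuristic is that at height $m$, the $p$-th cyclotomic extension of the telescopic (equivalently $K(m)$-local) sphere is built from $\Sigma^\infty_+ B^m C_p$: this realizes $T(m)$, or a module over a chromatic ring built out of it, as a colimit obtained from $\rsus{B^m C_p}$ by tensoring with auxiliary spectra and taking iterated cofibers, thereby placing $T(m) \in \ideal{B^m C_p}$. For the $\FF_p$ case, once the finite-height statements are in hand, I would use that the mod-$p$ Moore spectrum $\Sph/p$ and then $\FF_p$ are assembled via iterated cofibers of $v_i$-self-maps from objects in $\ideal{T(i)}$ for varying $i$; composed with the finite-height containment and the ambient chain inclusion, this lands $\FF_p$ in $\ideal{B^k C_p}$.

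The formal ideal manipulations---the shift of \Cref{kernel_Lnf_L_infty}, the use of the descending chain, and assembling the pieces---are routine. The real obstacle is the identification $T(m) \in \ideal{B^m C_p}$: bridging the unstable Eilenberg-MacLane space $B^m C_p$ and the chromatic telescope $T(m)$ is where the deep input of \cite{carmeli2021chromatic} enters, and this is the heart of the proof. Once that pivotal containment is established, the remainder of the proposition follows by the formal chase outlined above.
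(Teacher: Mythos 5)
The reduction steps you carry out---using \Cref{EM_ideal_in_finite} for one inclusion, \Cref{kernel_Lnf_L_infty} shifted to $n$ for the other, and \Cref{EM_ideals_chain} to reduce to $T(m) \in \ideal{B^m C_p}$---match the paper exactly. The problems lie in how you handle the two pivotal containments.

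For finite $m$, you correctly identify that the chromatic cyclotomic extensions of \cite{carmeli2021chromatic} are the input, but your description of how they are used (``$T(m)$ as a colimit obtained from $\rsus{B^m C_p}$ by tensoring with auxiliary spectra and taking iterated cofibers'') misses the actual mechanism and, as stated, isn't a proof. What the paper does is exhibit $T(n)$ as a \emph{retract} of $T(n) \otimes \rsus{B^n C_p}$: one identifies $L_{T(n)}\rsus{B^n C_p}$ with the cyclotomic extension $\cyc[\Sph_{T(n)}]{p}{n}$, which is $\FF_p^\times$-Galois over $\Sph_{T(n)}$, so that $(T(n)\otimes \rsus{B^n C_p})^{h\FF_p^\times}\simeq T(n)$; and then uses that $|\FF_p^\times|=p-1$ is invertible on the $p$-complete spectrum $T(n)\otimes\rsus{B^n C_p}$, so the fixed points split off as a retract. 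The retract structure, produced by the prime-to-$p$ group action, is the whole argument; gesturing at colimits and cofibers doesn't deliver it.

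The $\FF_p$ case is where your plan actually breaks. You propose to ``assemble'' $\FF_p$ via iterated cofibers of $v_i$-self-maps from objects in the $\ideal{T(i)}$ with $i$ finite, and then feed in the finite-height containments. This would show $\FF_p\in\ideal{T(0),T(1),\dots}$ (finite $i$ only), which is false: at every finite stage of such an assembly you are left with a finite complex of type $n+1$ (e.g., a Smith--Toda-type spectrum), and that residue does not vanish in the colimit. Indeed $L_n^f\FF_p=0$ for every finite $n$, so $\FF_p$ lies in none of the $\ideal{T(0),\dots,T(n)}=L_n^f\Sp$, and it is not in their union either. This is precisely why $T(\infty)=\FF_p$ appears as a separate generator in the definition of $\Spch$. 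The correct argument for this case is much more elementary and does not route through the finite heights at all: $\rsus{B^n C_p}\otimes\FF_p$ is a nonzero $\FF_p$-module spectrum, hence splits as a (nonempty) wedge of suspensions of $\FF_p$, so $\FF_p$ is a retract of a suspension of $\rsus{B^n C_p}\otimes\FF_p\in\ideal{B^n C_p}$.
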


\begin{proof}
First, by \Cref{EM_ideal_in_finite}, $\ideal{B^nC_p}\subseteq \ideal{F(n)}$. Thus,  it remains to show that 
\[
\ideal{F(n)}\cap \Spch\subseteq \ideal{B^nC_p}\cap \Spch. 
\]
Second, by \Cref{kernel_Lnf_L_infty},
\[
\ideal{F(n)}\cap \Spch = \ideal{T(n),\dots,T(\infty)}
\]
so the above inclusion is equivalent to the fact that $T(m)\in \ideal{B^nC_p}$ for $m\ge n$, including $m=\infty$. We begin with the case of finite $m$. Since by \Cref{EM_ideals_chain} there is a containment $\ideal{B^nC_p} \supseteq \ideal{B^mC_p}$, it would suffice to check this for $n=m$, that is, we only have to show that $T(n)\in \ideal{B^nC_p}$ for all $n\in \NN$.
The tensor product $T(n)\otimes \rsus{B^nC_p}$ belongs to $\ideal{B^nC_p}$, so it would suffice to exhibit $T(n)$ as a retract of this tensor product. By definition, $L_{T(n)} \rsus{B^nC_p}= \cyc[\Sph_{T(n)}]{p}{n}$ is the $p$-th cyclotomic extension of the $T(n)$-local sphere from \cite{carmeli2021chromatic}. By \cite[Proposition 5.2]{carmeli2021chromatic}, this cyclotomic extensions is an $\FF_p^\times$-Galois extension of $\Sph_{T(n)}$ and in particular
\[
\cyc[\Sph_{T(n)}]{p}{n}^{h\FF_p^\times}\simeq \Sph_{T(n)}.
\] 
Tensoring this isomorphism with $T(n)$ we get 
\[
(T(n)\otimes \rsus{B^nC_p})^{\FF_p^\times} \simeq T(n) \otimes  \rsus{B^nC_p})^{\FF_p^\times} \simeq T(n) \otimes  L_{T(n)}\rsus{B^nC_p})^{\FF_p^\times} \simeq T(n),
\]
where we used the facts that tensoring with $T(n)$ preserves $\FF_p^\times$-fixed points and that it is invariant under $T(n)$-localization.
Finally, since $T(n)\otimes \rsus{B^nC_p}$ is $p$-complete and $\FF_p^\times$ is of order prime to $p$, the fixed points for the action are a retract of the underlying spectrum, that is, $T(n)$ is a retract of $T(n)\otimes \rsus{B^nC_p}$. 

It remains to show that $T(\infty):= \FF_p \in \ideal{B^nC_p}$. Similarly to the finite case, it would suffice to show that $\FF_p$ is, up to suspension, a retract of $\rsus{B^nC_p}\otimes \FF_p$. But $\rsus{B^nC_p}\otimes \FF_p$ splits into a non-empty sum of suspensions of copies of $\FF_p$, because it is a non-zero $\FF_p$-module spectrum.
\end{proof} 


\begin{rem}
Robert Burklund explained to the author that a variant of an unpublished argument of Hopkins and Smith for the equality of the Bausfield classes of $\rsus{BC_p}$ and $F(1)$ gives the equality of ideals $\ideal{BC_p}= \ideal{F(1)}$. Consequently, for $n=1$ the localizing subcategory $L_{\ideal{BC_p}}\Sp \subseteq \Sp$ is the $\infty$-category of $\Sph[\inv{p}]$-module spectra. Such a simple description is impossible for higher values of $m$, though, as the following example shows.
\end{rem}

\begin{example}\label{ex:Lee}
The ideals $\ideal{B^2C_p}$ and $\ideal{F(2)}$ are different. In fact, 
the $p$-complete sphere spectrum $\Sph_p$ belongs to $L_{\ideal{B^2C_p}}\Sp$. Indeed, this statement is equivalent to the claim that pullback morphism $\pi^*\colon \Sph_p \to \Sph_p^{B^2C_p}$ along the terminal map $\pi\colon B^2C_p\to \pt$ is an isomorphism, which is a special case of \cite[Theorem 2.5]{lee1992stable}. 
\end{example}

It will be convenient to reformulate \Cref{Tm_in_BmCp} in terms of localization functors and mapping spectra.

\begin{cor} \label{chromatic_EM_loc}
For all $n\ge 1$, the localization $L_{n-1}^f$ is a further localization of $L_{\ideal{B^nC_p}}$. On the other hand, they agree when restricted to $\Spch$. Namely, for $X\in \Spch$ we have 
\[
L_{\ideal{B^nC_p}}X \simeq L_{n-1}^fX.
\] 
\end{cor}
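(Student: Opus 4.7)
The plan is to derive both assertions from the ideal-theoretic content already in place. For the first assertion, I would recall that by definition $L_{n-1}^f$ coincides with $L_{\ideal{F(n)}}$, since $\Sp/\ideal{F(n)}\simeq L_{n-1}^f\Sp$. Thus the inclusion $\ideal{B^nC_p}\subseteq \ideal{F(n)}$ provided by \Cref{EM_ideal_in_finite} immediately exhibits $L_{n-1}^f$ as a further Bousfield localization of $L_{\ideal{B^nC_p}}$, since the kernel of the latter is contained in the kernel of the former.

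For the agreement on $\Spch$, my approach is to analyze the fiber $F$ of the unit map $X\to L_{n-1}^fX$ for $X\in \Spch$. By construction $F\in \ideal{F(n)}$, and if I can additionally show $F\in \Spch$, then \Cref{Tm_in_BmCp} will force $F\in \ideal{B^nC_p}$. To see that $F\in \Spch$, I will use the identification $L_{n-1}^f\Sp = \ideal{T(0),\dots,T(n-1)}$ from \eqref{eq:split_Lmf}, which places $L_{n-1}^fX$ inside $\Spch$; since $\Spch$ is a localizing ideal it is in particular stable under finite limits, so the fiber $F$, sitting between two objects of $\Spch$, itself lies in $\Spch$.

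Having established $F\in \ideal{B^nC_p}$, the proof concludes by applying $L_{\ideal{B^nC_p}}$ to the fiber sequence $F\to X\to L_{n-1}^fX$: the fiber term is annihilated, while the right-hand term is already $\ideal{B^nC_p}$-local by the first assertion, yielding the desired equivalence $L_{\ideal{B^nC_p}}X\simeq L_{n-1}^fX$. The only genuinely nontrivial input is \Cref{Tm_in_BmCp}; this corollary is essentially a repackaging of that identification in terms of localization functors, and I expect no real obstacle beyond the bookkeeping just described.
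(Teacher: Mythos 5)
Your proof is correct and follows essentially the same route as the paper: the first assertion comes from \Cref{EM_ideal_in_finite} together with the characterization of further localizations by inclusion of kernels, and the second from \Cref{Tm_in_BmCp}. The paper compresses the second step into the remark that the two localizations of $\Spch$ have the same kernel; your fiber-sequence argument (using that $L_{n-1}^f\Sp=\ideal{T(0),\dots,T(n-1)}\subseteq\Spch$, so the fiber $F$ of $X\to L_{n-1}^fX$ lies in $\ideal{F(n)}\cap\Spch=\ideal{B^nC_p}\cap\Spch$, and that $L_{n-1}^fX$ is already $\ideal{B^nC_p}$-local) is precisely the bookkeeping needed to make that remark precise.
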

\begin{proof}
Recall that a localization $L$ is a further localization of $L'$ if and only if the kernel of $L$ contains that of $L'$. Thus, the first part follows from \Cref{EM_ideal_in_finite} applied to the localizations $L_{\ideal{B^nC_p}}$ and $L_{n-1}^f$ of $\Sp$, while the second follows from \Cref{Tm_in_BmCp} applied to the corresponding localizations of $\Spch$.  
\end{proof}

\begin{cor}\label{almost_Lnf_EM_loc}
Let $X$ be an almost $L_n^f$-local spectrum for some $n\in \NN$. Then 
\[
L_{\ideal{B^nC_p}}X\simeq L_{n-1}^fX.
\] 
\end{cor}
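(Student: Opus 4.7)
The plan is to observe that this is essentially an immediate combination of two previously established results, and to spell out the chain of implications.

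First, I would invoke \Cref{almost_local_chromatic}, which states precisely that an almost $L_n^f$-local spectrum lies in $\Spch$. This is the only substantive use of the hypothesis: once $X\in \Spch$, the almost locality condition plays no further role.

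Then I would apply the second assertion of \Cref{chromatic_EM_loc}, which says that for any $X\in \Spch$ the localization $L_{\ideal{B^nC_p}}X$ coincides with $L_{n-1}^fX$. Combining the two steps yields $L_{\ideal{B^nC_p}}X\simeq L_{n-1}^fX$, as desired.

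There is essentially no obstacle here: the real content has already been absorbed into the earlier results. The construction of \Cref{Tm_in_BmCp}, which identifies the chromatically relevant part of $\ideal{B^nC_p}$ with that of $\ideal{F(n)}$ by exhibiting each $T(m)$ for $m\ge n$ as a retract of $T(m)\otimes\rsus{B^nC_p}$ via the cyclotomic Galois extension machinery of \cite{carmeli2021chromatic}, is the technical heart. Thus the corollary really amounts to observing that the hypothesis of almost $L_n^f$-locality is just a convenient concrete criterion ensuring membership in $\Spch$, after which the equality of localizations is automatic.
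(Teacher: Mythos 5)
Your proof is correct and matches the paper's argument exactly: invoke \Cref{almost_local_chromatic} to place $X$ in $\Spch$, then apply \Cref{chromatic_EM_loc}. The additional commentary about \Cref{Tm_in_BmCp} being the technical heart is accurate context but not needed for the corollary itself.
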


\begin{proof}
By \Cref{almost_local_chromatic}, $X\in \Spch$, so the result follows from \Cref{chromatic_EM_loc}. 
\end{proof}

\begin{cor}\label{maps_from_chromatic_to_EM_colocal}
Let $Y\in L_{\ideal{B^nC_p}}\Sp$ and let $X\in \Spch$ (e.g. an almost $L_m^f$-local spectrum for some $m\in \NN$). Then, 
\[
\hom(X,Y)\simeq \hom(L_{n-1}^fX,Y).
\]
\end{cor}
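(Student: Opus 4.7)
The plan is to combine two ingredients already established earlier in the paper, with essentially no additional input required. First, the defining property of $L_{\ideal{B^nC_p}}\Sp$ as a reflective subcategory gives that for any $Y$ in this subcategory and any $X \in \Sp$, the unit map $X \to L_{\ideal{B^nC_p}} X$ induces an equivalence
\[
\hom(L_{\ideal{B^nC_p}} X, Y) \iso \hom(X, Y).
\]
This is just the statement that the reflector $L_{\ideal{B^nC_p}}$ is left adjoint to the inclusion, applied to mapping spectra rather than mapping spaces (which is fine because the adjunction is enriched in spectra: the ideal $\ideal{B^nC_p}$ is closed under tensoring with arbitrary spectra, so its right orthogonal is closed under cotensoring).

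Second, under the hypothesis $X \in \Spch$, Corollary \ref{chromatic_EM_loc} gives a canonical identification $L_{\ideal{B^nC_p}} X \simeq L_{n-1}^f X$. Stringing the two equivalences together yields
\[
\hom(X, Y) \simeq \hom(L_{\ideal{B^nC_p}} X, Y) \simeq \hom(L_{n-1}^f X, Y),
\]
which is the desired conclusion. The parenthetical remark about almost $L_m^f$-local spectra is just an instance of the hypothesis, since Proposition \ref{almost_local_chromatic} shows every almost $L_m^f$-local spectrum belongs to $\Spch$.

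There is no real obstacle here: the corollary is essentially the mapping-spectrum translation of Corollary \ref{chromatic_EM_loc}, and the only thing to observe is that the localization $L_{\ideal{B^nC_p}}$ is smashing enough to interact correctly with internal mapping objects, which is automatic from the fact that $\ideal{B^nC_p}$ is a tensor ideal.
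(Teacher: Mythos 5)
Your proof is correct and follows the same two-step route as the paper: first use the reflective/localization adjunction to replace $X$ by $L_{\ideal{B^nC_p}}X$ in the mapping spectrum, then apply Corollary \ref{chromatic_EM_loc} to identify this with $L_{n-1}^fX$. The only addition is your explicit remark about the enrichment (the orthogonal complement being closed under cotensoring because $\ideal{B^nC_p}$ is a tensor ideal), which the paper leaves implicit.
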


\begin{proof}
Since $Y\in L_{\ideal{B^nC_p}}\Sp$, we have
\[
\hom(X,Y)\simeq \hom(L_{\ideal{B^nC_p}}X,Y).
\]
But $L_{\ideal{B^nC_p}}X\simeq L_{n-1}^fX$ by \Cref{chromatic_EM_loc} and the result follows.
\end{proof}

\section{Maps into the Units of the Sphere Spectrum}
We shall now consider the implications of the theory developed in the previous section to the spectrum $\Sph_p^\times$. 
Note that 
\begin{equation} \label{eq:units_split}
\Sph_p^\times\simeq (\Sph_p^\times)^\wedge_p \oplus \FF_p^\times, 
\end{equation}
so studyings maps of connective spectra into $\Sph_p^\times$ and its $p$-completion are essentially the same.  
We first show that it is safe to replace the $p$-completion with a suitable delooping, so that we can work stably. 

\begin{prop}
The canonical map 
$
(\Sph_p^\times)^\wedge_p \to \tau_{\ge 0}L_{\ideal{\ZZ}}(\Sph_p^\times)^\wedge_p
$
is an isomorphism.
\end{prop}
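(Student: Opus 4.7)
The plan is to apply \Cref{connective_nonconnective_Z_coacyclic} directly, with condition \ref{item:cond_1} from the outline as the essential input. Since $\Sph_p^\times$ is connective (with $\pi_0 = \ZZ_p^\times$ and higher homotopy groups agreeing with those of $\Sph_p$), its $p$-completion $(\Sph_p^\times)^\wedge_p$ is also connective. Hence \Cref{connective_nonconnective_Z_coacyclic} applies, and the equivalence (1) $\iff$ (2) there reduces the claim to the vanishing
\[
\tau_{\ge -1}\hom(\ZZ, (\Sph_p^\times)^\wedge_p) = 0.
\]

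I would then unwind this into a mapping-space statement: for any spectrum $Y$ one has $\pi_i \Map(\ZZ, \Sigma Y) \simeq \pi_{i-1}\hom(\ZZ, Y)$ for $i \ge 0$, so the displayed vanishing is equivalent to the contractibility of $\Map(\ZZ, \Sigma(\Sph_p^\times)^\wedge_p)$. This is precisely condition \ref{item:cond_1} from the introduction, which the outline attributes to the main result of \cite{carmeli2023strict}.

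Modulo that cited input, the proof is purely formal: once \Cref{connective_nonconnective_Z_coacyclic} is in place, all that remains is to recognize condition (2) there as the spectrum-level translation of condition \ref{item:cond_1}. The only genuine obstacle is packaged into \cite{carmeli2023strict}, which carries out the strict-units computation for the $p$-complete sphere and is invoked here as a black box.
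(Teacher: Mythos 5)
Your proof is correct in outline and makes the right reduction, but it takes a slightly different branch of \Cref{connective_nonconnective_Z_coacyclic} than the paper, and this difference is not entirely cosmetic. You go through the equivalence $(1)\iff(2)$: reduce to $\tau_{\ge -1}\hom(\ZZ,(\Sph_p^\times)^\wedge_p)=0$, translate this to $\Map(\ZZ,\Sigma(\Sph_p^\times)^\wedge_p)\simeq\pt$, and cite \cite{carmeli2023strict}. The paper instead uses the implication $(3)\implies(1)$: it chooses the witness $Y=\pic(\Sph_p)^\wedge_p$, cites \cite[Corollary 1.2]{carmeli2023strict} \emph{for that spectrum} --- i.e.\ $\Map(\ZZ,\pic(\Sph_p)^\wedge_p)\simeq\pt$ --- and then checks $\tau_{\ge 0}\Sigma^{-1}Y\simeq(\Sph_p^\times)^\wedge_p$, which requires noting that $p$-completion commutes with connective cover here because of the finite generation of $\pi_*\pic(\Sph_p)$.

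The point of contrast is that the cited result is naturally a statement about $\pic(\Sph_p)^\wedge_p$, not about $\Sigma(\Sph_p^\times)^\wedge_p\simeq\tau_{\ge 1}\pic(\Sph_p)^\wedge_p$. To pass from one to the other you need the fiber sequence $\tau_{\ge 1}\pic(\Sph_p)^\wedge_p\to\pic(\Sph_p)^\wedge_p\to\pi_0\pic(\Sph_p)^\wedge_p$ together with the observation that $\Map(\ZZ,-)$ of a discrete spectrum is discrete --- this is precisely the content of the $(3)\implies(2)$ direction already proved inside \Cref{connective_nonconnective_Z_coacyclic}. So your route via condition (2) is legitimate, but it silently re-derives condition (1) of the introduction rather than literally reading it off from the reference; the paper's choice of condition (3) sidesteps this by allowing a witness $Y$ that is not equal to $\Sigma X$ on the nose. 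Net effect: same ingredients, the paper's formulation is marginally more economical and more honest about what \cite{carmeli2023strict} actually provides.
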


\begin{proof}
By \Cref{connective_nonconnective_Z_coacyclic}, it would suffice to show that there is a connective spectrum $Y$ such that $\tau_{\ge 0}\Sigma^{-1}Y\simeq (\Sph_p^\times)^\wedge_p$ and $\Map(\ZZ,Y)=\pt$. In fact, we can choose $Y$ to be the $p$-completion of the Picard spectrum $\pic(\Sph_p)$. Indeed, it follows from \cite[Corollary 1.2]{carmeli2023strict} that  $\Map(\ZZ,\pic(\Sph_p)^\wedge_p)\simeq \pt$, and since $\pic(\Sph_p)$ has finitely generated homotopy groups, taking connected cover commutes with $p$-completion for it, so that $\tau_{\ge 0}\Sigma^{-1}Y \simeq (\Sph_p^\times)^\wedge_p.$
\end{proof}

From now on, we shall work with this non-connective spectrum, which we abbreviate as follows:

\begin{defn}
We define the spectrum 
$
\mdef{\br}:= L_{\ideal{\ZZ}}(\Sph_p^\times)^\wedge_p.
$
\end{defn}
Hence, 
\[
\tau_{\ge 0}\br \simeq (\Sph_p^\times)^\wedge_p,
\]
and by definition, $\br \in L_{\ideal{\ZZ}}\Sp$. 
We shall need two other such locality properties of $\br$.
\begin{prop}\label{br_p_complete}
The spectrum $\br$ is $p$-complete.  
\end{prop}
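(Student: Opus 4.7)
The plan is to recycle the iterative construction from the proof of \Cref{connective_nonconnective_Z_coacyclic}, applied to the connective $p$-complete spectrum $(\Sph_p^\times)^\wedge_p$, and to keep track of $p$-completeness at every stage. Concretely, I would set $X_0 := (\Sph_p^\times)^\wedge_p$ and, iteratively,
\[
X_{k+1} := \cofib(\hom(\ZZ, X_k) \to X_k).
\]
The previous proposition tells us that $X_0$ satisfies condition (1) (hence also (2)) of \Cref{connective_nonconnective_Z_coacyclic}, so the argument given there produces both an identification $\br \simeq \colim_k X_k$ and the estimate $\tau_{\ge -k-1}\hom(\ZZ, X_k) = 0$ for every $k \ge 0$.

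The first step is to verify by induction that each $X_k$ is $p$-complete. The base case is immediate. For the inductive step, the subcategory $\Sp^\wedge_p \subseteq \Sp$ is closed under limits, so $\hom(\ZZ, X_k)$ inherits $p$-completeness from $X_k$; and the cofiber of a map between two $p$-complete spectra is itself $p$-complete, as seen by applying the exact functor $\Map(\Sph[1/p], -)$ to the defining cofiber sequence and observing that both outer terms vanish.

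The second step is to read off the homotopy groups of $\br$. The estimate $\tau_{\ge -k-1}\hom(\ZZ, X_k) = 0$, combined with the long exact sequence associated with the cofiber sequence defining $X_{k+1}$, yields that $\pi_m X_k \iso \pi_m X_{k+1}$ as soon as $m \ge -k$. Hence for fixed $m$ the sequence $(\pi_m X_k)_k$ stabilizes, so $\pi_m \br \simeq \pi_m X_k$ for all $k \ge -m$; since $X_k$ is $p$-complete this group is derived $p$-complete.

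The proof then concludes by invoking the standard identification, via the Milnor sequence computing $\pi_* \Map(\Sph[1/p], -)$, that a spectrum is $p$-complete precisely when each of its homotopy groups is derived $p$-complete as an abelian group. I do not expect a serious obstacle: the only slightly delicate point is that an arbitrary sequential colimit of $p$-complete spectra need not be $p$-complete, but the level-wise stabilization of $(\pi_m X_k)_k$ allows us to sidestep this by verifying $p$-completeness one homotopy group at a time.
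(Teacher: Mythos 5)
Your proof is correct, but it takes a genuinely different route from the paper's. The paper's proof is much shorter: since $\br\in L_{\ideal{\ZZ}}\Sp$, \Cref{connective_cover_ff} gives $\Map(X,\br)\simeq\Map(\tau_{\ge 0}X,\tau_{\ge 0}\br)=\Map(\tau_{\ge 0}X,(\Sph_p^\times)^\wedge_p)$ for any $X$, and when $p$ is invertible on $X$ it is also invertible on $\tau_{\ge 0}X$, so the $p$-completeness of $(\Sph_p^\times)^\wedge_p$ immediately forces this space to be a point. You instead unwind the iterative colimit construction $\br\simeq\colim_k X_k$ from the proof of \Cref{connective_nonconnective_Z_coacyclic}, track $p$-completeness at each stage (using that $\hom(\ZZ,-)$ and cofibers preserve it), and then use the estimate $\tau_{\ge -k-1}\hom(\ZZ,X_k)=0$ to see that the homotopy groups of the colimit stabilize levelwise, so that each $\pi_m\br$ is derived $p$-complete and the Milnor-sequence characterization applies. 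Both arguments are sound. The paper's is more economical because it reuses the adjunction already built, and avoids any appeal to the ``homotopy groups detect $p$-completeness'' criterion. Your version is more explicit and gives a concrete handle on $\pi_*\br$, at the price of reopening the colimit construction and reimporting the standard (but not otherwise needed) fact that $p$-completeness is equivalent to derived $p$-completeness of homotopy groups. One small notational point: where you write ``the exact functor $\Map(\Sph[1/p],-)$'' and ``$\pi_*\Map(\Sph[1/p],-)$'', you should mean the internal mapping \emph{spectrum} $\hom(\Sph[1/p],-)$, not the mapping space $\Map$; only the former is exact and only it supports the vanishing/Milnor-sequence argument.
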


\begin{proof}
We have to show that $\Map(X,\br) \simeq \pt$ for every spectrum $X$ on which $p$ is invertible. Since $\br\in L_{\ideal{\ZZ}} \Sp$, by \Cref{connective_cover_ff} we have the following isomorphisms: 
\[
\Map(X,\br) \simeq \Map(\tau_{\ge 0}X,\tau_{\ge 0}\br)\simeq \Map(\tau_{\ge 0}X,(\Sph_p^\times)^\wedge_p). 
\]
Since $p$ is invertible also on $\tau_{\ge 0}X$ and $(\Sph_p^\times)^\wedge_p$ is $p$-complete, the latter mapping space is contractible and the result follows. 
\end{proof} 
 
By Lee's theorem, no non-zero maps of connective spectra $\rsus{B^2C_p} \to \Sph_p^\times$ exist. This formally implies an analogous property for our spectrum $\br$.
\begin{prop}
The spectrum $\br$ satisfies
$
\br\in L_{\ideal{B^2C_p}}\Sp.
$
\end{prop}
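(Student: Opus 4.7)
The plan is to reduce the claim to the vanishing of the mapping spectrum $\hom(\rsus{B^2C_p},\br)$. Granting this, for every $Z\in\Sp$ one has $\Map(Z\otimes\rsus{B^2C_p},\br)\simeq\Map(Z,\hom(\rsus{B^2C_p},\br))\simeq\pt$, and since the full subcategory $\{X\in\Sp:\Map(X,\br)\simeq\pt\}$ is closed under colimits and contains every such $Z\otimes\rsus{B^2C_p}$, it then contains the whole localizing ideal $\ideal{B^2C_p}$.

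To prove $\hom(\rsus{B^2C_p},\br)=0$, I first observe that this mapping spectrum itself lies in $L_{\ideal{\ZZ}}\Sp$: for $Z\in\ideal{\ZZ}$ the tensor-hom adjunction gives $\Map(Z,\hom(\rsus{B^2C_p},\br))\simeq\Map(Z\otimes\rsus{B^2C_p},\br)$, and the right-hand side is contractible because $Z\otimes\rsus{B^2C_p}\in\ideal{\ZZ}$ (as the latter is an ideal) and $\br\in L_{\ideal{\ZZ}}\Sp$. By \Cref{connective_cover_ff}, the functor $\tau_{\ge 0}\colon L_{\ideal{\ZZ}}\Sp\to\Sp^\cn$ is fully faithful, so it is enough to show that $\tau_{\ge 0}\hom(\rsus{B^2C_p},\br)=0$. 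Since $\rsus{B^2C_p}$ is connective, a short argument using the fiber sequence $\tau_{\ge 0}\br\to\br\to\tau_{\le -1}\br$ (together with $\Omega^\infty\tau_{\le -1}\br\simeq\pt$) yields the truncation-exchange identity $\tau_{\ge 0}\hom(\rsus{B^2C_p},\br)\simeq\tau_{\ge 0}\hom(\rsus{B^2C_p},\tau_{\ge 0}\br)\simeq\tau_{\ge 0}\hom(\rsus{B^2C_p},(\Sph_p^\times)^\wedge_p)$.

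It remains to verify that $\tau_{\ge 0}\hom(\rsus{B^2C_p},(\Sph_p^\times)^\wedge_p)=0$, which I would do by comparison with $\Sph_p$. Both $\Sph_p$ and $(\Sph_p^\times)^\wedge_p$ sit in Postnikov fiber sequences of the form $\tau_{\ge 1}\Sph_p\to (-)\to HA$ with $A$ an abelian group (using that $\tau_{\ge 1}(\Sph_p^\times)^\wedge_p\simeq\tau_{\ge 1}\Sph_p$, since higher homotopy groups of units coincide with those of the ring). Applying $\tau_{\ge 0}\hom(\rsus{B^2C_p},-)$, the Eilenberg-MacLane term contributes nothing because the reduced cohomology of the connected space $B^2C_p$ in non-positive degrees vanishes, and the $\Sph_p$ term vanishes by Lee's theorem (\Cref{ex:Lee}). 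The fiber sequence for $\Sph_p$ therefore forces $\tau_{\ge 0}\hom(\rsus{B^2C_p},\tau_{\ge 1}\Sph_p)=0$, and then the fiber sequence for the units yields $\tau_{\ge 0}\hom(\rsus{B^2C_p},(\Sph_p^\times)^\wedge_p)=0$, completing the argument. The only mildly delicate point is the truncation-exchange identity for connective sources; combined with the full faithfulness of $\tau_{\ge 0}$ on $L_{\ideal{\ZZ}}\Sp$, it is what upgrades Lee's classical vanishing for connective targets into the non-connective vanishing needed for $\br$.
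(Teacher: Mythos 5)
Your proof is correct, and the overall skeleton coincides with the paper's: reduce to $\hom(\rsus{B^2C_p},\br)=0$, observe that this mapping spectrum lies in $L_{\ideal{\ZZ}}\Sp$, reduce the vanishing to a connective statement, and feed Lee's theorem in at the end. The difference is in how Lee's theorem is brought to bear. You pass to the Postnikov fiber sequence $\tau_{\ge 1}\Sph_p\to (\Sph_p^\times)^\wedge_p\to HA$, using the identification $\tau_{\ge 1}(\Sph_p^\times)^\wedge_p\simeq \tau_{\ge 1}\Sph_p$ (which in turn relies on $\Sph_p^\times$ having the same higher homotopy as $\Sph_p$, together with $p$-completion commuting with truncation in this setting), then kill the Eilenberg--MacLane contributions degree by degree. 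The paper instead notices that units commute with cotensoring by a space, $(\Sph_p^\times)^{B^2C_p}\simeq (\Sph_p^{B^2C_p})^\times$, so that Lee's equivalence $\Sph_p\iso \Sph_p^{B^2C_p}$ translates immediately into contractibility of $\Map(\rsus{B^2C_p},\Sph_p^\times)$ with no Postnikov bookkeeping. The paper's route is tighter; yours is more hands-on but needs the additional verifications that $\tau_{\ge 1}(\Sph_p^\times)^\wedge_p\simeq\tau_{\ge 1}\Sph_p$ and that the two $HA$ terms drop out in the relevant range. Both are sound; neither requires anything beyond what is already developed in the paper. One minor stylistic remark: in your final step it is cleanest to say that $\tilde H^{k}(B^2C_p;A)$ vanishes for $k\le 0$ simply because negative-degree cohomology of a space is trivially zero and $\tilde H^0$ vanishes by connectedness --- this already suffices for your long-exact-sequence argument without invoking simple connectivity.
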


\begin{proof}
We have to show that $\hom(\rsus{B^2C_p},\br) = 0$. Now,
\begin{align*}
\Omega^\infty\hom(\rsus{B^2C_p},\br)&\simeq \Map(\rsus{B^2C_p},\br)\simeq \Map(\rsus{B^2C_p},\tau_{\ge 0} \br) \\ 
&\simeq \Map(\rsus{B^2C_p},(\Sph_p^\times)^\wedge_p) \simeq \Map(\rsus{B^2C_p},\Sph_p^\times) 
\\&\simeq 
\Fib((\Sph_p^\times)^{B^2C_p} \to \Sph_p^\times) \simeq \Fib((\Sph_p^{B^2C_p})^\times \to \Sph_p^\times), 
\end{align*}
where the second isomorphism is since $\rsus{B^2C_p}$ is connective, the third by \Cref{coacylcic_vs_bounded_above}, and the fourth since $\rsus{B^2C_p}\in \ideal{F(1)}$, so maps from it to a spectrum and to its $p$-completion are the same.

By Lee's theorem (see \Cref{ex:Lee}), $\Sph_p\iso \Sph_p^{B^2C_p}$ so the fiber of the induced map  on units 
$(\Sph_p^{B^2C_p})^\times \to \Sph_p^\times
$ 
is contractible. We deduce that $\Omega^\infty \hom(\rsus{B^2C_p},\br) = \pt$, so that in particular $\hom(\rsus{B^2C_p},\br)$ is bounded above. However, by definition, $\br\in L_{\ideal{\ZZ}}\Sp$. Since $\ideal{\ZZ}$ is closed under tensoring with $\rsus{B^2C_p}$, the localizing subcategory $L_{\ideal{\ZZ}}\Sp$ is closed under applying $\hom(\rsus{B^2C_p},-)$ and we deduce that 
$
\hom(\rsus{B^2C_p},\br)\in L_{\ideal{\ZZ}}\Sp.
$
Together, by \Cref{coacylcic_vs_bounded_above} these imply that $\hom(\rsus{B^2C_p},\br)=0$.   
\end{proof}

From the relationship between the ideal of $B^2C_p$ and the chromatic localizations of the previous section, we deduce the following:

\begin{cor}\label{maps_into_G_L_1}
With the notation as above, 
for $X\in \Spch$ we have 
\[
\Map(X,\br)\simeq \Map(L_{K(1)}X,\br).  
\]
\end{cor}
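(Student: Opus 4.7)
The plan is to assemble three facts already established. First, apply \Cref{maps_from_chromatic_to_EM_colocal} with $n=2$: since $\br\in L_{\ideal{B^2C_p}}\Sp$ and $X\in\Spch$, taking $\Omega^\infty$ of the mapping spectrum identification gives
\[
\Map(X,\br)\simeq\Map(L_1^fX,\br).
\]

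Next, by \Cref{br_p_complete}, $\br$ is $p$-complete, so mapping into $\br$ is insensitive to $p$-completion of the source:
\[
\Map(L_1^fX,\br)\simeq\Map((L_1^fX)^\wedge_p,\br).
\]

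Finally, one identifies $(L_1^fX)^\wedge_p$ with $L_{K(1)}X$. Since $p$-completion only depends on the $p$-localization of its input, and $L_1^f$ commutes with $p$-localization, we may assume $X$ is $p$-local. The telescope conjecture at height $1$ (Bousfield--Miller) gives $L_1^fX\simeq L_1X$ in that case, and then $L_{K(1)}X\simeq(L_1X)^\wedge_p$ by the standard fracture of $L_1$ at height $1$. Composing the three equivalences yields the claim, and the induced map is indeed the one obtained by factoring through the $K(1)$-localization.

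There is no real obstacle here: the corollary is essentially a bookkeeping consequence of the locality properties of $\br$ collected in the previous propositions, together with the (classical) identification of $L_{K(1)}$ as the $p$-completion of the finite $1$-localization on $p$-local spectra.
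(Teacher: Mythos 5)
Your proof is correct and follows the same three-step route as the paper: factor through $L_1^f$ via \Cref{maps_from_chromatic_to_EM_colocal}, use $p$-completeness of $\br$ from \Cref{br_p_complete}, and identify $(L_1^fX)^\wedge_p$ with $L_{K(1)}X$. The only difference is that you spell out the last identification via the Bousfield--Miller telescope conjecture at height $1$, whereas the paper simply asserts that $L_{K(1)}X$ is the $p$-completion of $L_1^fX$.
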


\begin{proof}
Since $\br\in L_{\ideal{B^2C_p}}\Sp$, we deduce from \Cref{maps_from_chromatic_to_EM_colocal} that 
\[
\Map(X,\br)\simeq \Map(L_1^f X,\br).
\]
Since by \Cref{br_p_complete}, $\br$ is also $p$-complete and $L_{K(1)}X$ is the $p$-completion of $L_1^fX$, we obtain
\[
\Map(L_1^f X,\br) \simeq \Map((L_1^f X)^\wedge_p,\br) \simeq \Map(L_{K(1)}X,\br).
\]
\end{proof}

For connective $X$, this immediately translates to our main result regarding  $\Sph_p^\times$. 

\begin{thm} \label{main_thm}
Let $X$ be a connective spectrum in $\Spch$. Then,
\[
\Map(X,\Sph_p^\times) \simeq \Hom_{\Ab}(\pi_0X,\FF_p^\times) \times  \Map(\tau_{\ge 0}L_{K(1)} X,(\Sph_p^\times)^\wedge_p). 
\]
\end{thm}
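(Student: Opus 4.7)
The plan is to combine the splitting of $\Sph_p^\times$ with the identification of $(\Sph_p^\times)^\wedge_p$ as the connective cover of $\br$, and then invoke the key Corollary \ref{maps_into_G_L_1} on the non-connective side.

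First, using \eqref{eq:units_split}, I would split the mapping space as
\[
\Map(X,\Sph_p^\times) \simeq \Map(X,\FF_p^\times) \times \Map(X,(\Sph_p^\times)^\wedge_p).
\]
For the first factor, since $\FF_p^\times$ is a discrete abelian group regarded as a spectrum concentrated in degree zero and $X$ is connective, mapping out of $X$ factors through $\pi_0 X$, giving $\Map(X,\FF_p^\times)\simeq \Hom_\Ab(\pi_0 X,\FF_p^\times)$.

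For the second factor, I would exploit that $(\Sph_p^\times)^\wedge_p \simeq \tau_{\ge 0}\br$ and $\br \in L_{\ideal{\ZZ}}\Sp$. Since $X$ is connective and $\br$ is obtained from its connective cover by the Whitehead tower, $\Map(X,\tau_{\ge 0}\br) \simeq \Map(X,\br)$. Now \Cref{maps_into_G_L_1} applies because $X\in \Spch$, yielding
\[
\Map(X,\br) \simeq \Map(L_{K(1)}X,\br).
\]
To convert the right hand side back into a mapping space into $(\Sph_p^\times)^\wedge_p$, I would use the adjunction of \Cref{connective_cover_ff} (fully faithfulness of $\tau_{\ge 0}\colon L_{\ideal{\ZZ}}\Sp\into \Sp^\cn$) together with \Cref{Z_coloc_connective_cover}, which gives
\[
\Map(L_{K(1)}X,\br) \simeq \Map(L_{\ideal{\ZZ}}L_{K(1)}X,\br) \simeq \Map(L_{\ideal{\ZZ}}\tau_{\ge 0}L_{K(1)}X,\br) \simeq \Map(\tau_{\ge 0}L_{K(1)}X,(\Sph_p^\times)^\wedge_p).
\]

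Assembling the two factors gives the stated formula. There is no real obstacle here: all the hard work has been done in \Cref{maps_into_G_L_1} (which packages the chromatic/Eilenberg-MacLane comparison via \Cref{Tm_in_BmCp} together with $p$-completeness of $\br$) and in the identification $(\Sph_p^\times)^\wedge_p \simeq \tau_{\ge 0}\br$ (which uses \cite{carmeli2023strict}). The only mildly delicate point is bookkeeping the passage between connective and non-connective settings; this is handled cleanly by the adjunction between $\tau_{\ge 0}$ and $L_{\ideal{\ZZ}}$ restricted to connective spectra.
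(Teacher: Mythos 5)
Your proof is correct and follows essentially the same route as the paper's: split off the $\FF_p^\times$ factor, pass to $\br$ via $(\Sph_p^\times)^\wedge_p\simeq\tau_{\ge 0}\br$, apply \Cref{maps_into_G_L_1}, and convert back using the $L_{\ideal{\ZZ}}\dashv\tau_{\ge 0}$ adjunction of \Cref{connective_cover_ff}. The only difference is that you spell out the final conversion via \Cref{Z_coloc_connective_cover} more explicitly, where the paper simply cites \Cref{connective_cover_ff}.
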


\begin{proof}
Using the decomposition 
\[
\Sph_p^\times \simeq (\Sph_p^\times)^\wedge_p \oplus \FF_p^\times
\]
we immediately reduce the statement to
\[
\Map(X,(\Sph_p^\times)^\wedge_p)\simeq \Map(\tau_{\ge 0}L_{K(1)}X,(\Sph_p^\times)^\wedge_p)
\] 
This, in turn, follows from the chain of isomorphisms 
\begin{align*}
\Map(X,(\Sph_p^\times)^\wedge_p) &\simeq \Map(X,\br) \\ 
&\simeq \Map(L_{K(1)}X,\br)
\\ &\simeq \Map(\tau_{\ge 0}L_{K(1)}X,\tau_{\ge 0} \br) \\
&\simeq \Map(\tau_{\ge 0} L_{K(1)}X,(\Sph_p^\times)^\wedge_p)
\end{align*}
where the first isomorphism is since $X$ is connective and $\tau_{\ge 0}\br\simeq (\Sph_p^\times)^\wedge_p$, the second is \Cref{maps_into_G_L_1}, the third follows from \Cref{connective_cover_ff} and the fourth is again by
$\tau_{\ge 0}\br\simeq (\Sph_p^\times)^\wedge_p$. 
\end{proof}

\begin{rem}
Expanding \Cref{rem:pic_instead_of_gl},
it is natural to ask if one can obtain a similar result for $\pic(\Sph_p)$. Note that our result for $\Sph_p^\times$ depends on a vanishing result for $\pic(\Sph_p)^\wedge_p$, namely, that it receives no maps from $\ZZ$. Similarly, we could lift the result from $\Sph_p^\times$ to $\pic(\Sph_p)$ if we knew that there are no maps from $\ZZ$ to the $p$-completion of the Brauer spectrum of $\Sph_p$. This is a result of a work in progress with Burklund and Ramzi. 
\end{rem} 

\section{Applications}

To demonstrate the utility of \Cref{main_thm}, we list some examples of familiar spectra belonging to $\Spch$, for which \Cref{itro:main} applies.
First, observe that the connective cover of every $L_n^f$-local spectrum is in $\Spch$. For example, this applies to Lubin-Tate theories.

\begin{prop}
Let $E_n$ be a Lubin--Tate theory associated with a formal group law of height $n$ over a perfect ring $\kappa$ of characteristic $p$, so that 
$\pi_0E_n\simeq \WW(\kappa)[[u_1,\dots,u_n]]^\wedge_p$. 
Every map $\tau_{\ge 0}E_n\to \Sph_p^\times$ factors uniquely through the map 
$
\tau_{\ge 0}E_n \to \tau_{\ge 0}E_n[u_1^{-1}]^\wedge_p:
$
\[
\xymatrix{
\tau_{\ge 0}E_n\ar[rr]\ar[d] && \Sph_p^\times \\ 
\tau_{\ge 0}E_n[u_1^{-1}]^\wedge_p\ar@{-->}[rru] &&
}
\]
\end{prop}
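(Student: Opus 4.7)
The plan is to apply \Cref{main_thm} with $X = \tau_{\ge 0}E_n$ and identify each of the two factors on the right-hand side. First, to see that $\tau_{\ge 0}E_n \in \Spch$, note that $E_n$ is $K(n)$-local and hence $L_n^f$-local, so $E_n \otimes F(n+1) = 0$. From the fiber sequence $\tau_{\ge 0}E_n \to E_n \to \tau_{\le -1}E_n$ we then obtain $\tau_{\ge 0}E_n \otimes F(n+1) \simeq \Sigma^{-1}(\tau_{\le -1}E_n \otimes F(n+1))$, which is bounded above as the smash of a bounded above spectrum with a finite one. Thus $\tau_{\ge 0}E_n$ is almost $L_n^f$-local, and lies in $\Spch$ by \Cref{almost_local_chromatic}.

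The vanishing of the discrete factor is an easy algebraic check: $\pi_0 E_n \simeq \WW(\kappa)[[u_1,\dots,u_n]]^\wedge_p$ is $p$-complete and hence a $\ZZ_p$-module on which $p-1$ is a unit. Writing any $a \in \pi_0 E_n$ as $(p-1)a'$, any homomorphism $\phi$ to $\FF_p^\times$ satisfies $\phi(a) = \phi(a')^{p-1} = 1$, so $\Hom_\Ab(\pi_0 E_n, \FF_p^\times) = 0$.

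The main computation is to identify the remaining factor $\tau_{\ge 0} L_{K(1)} \tau_{\ge 0} E_n$ with $\tau_{\ge 0} E_n[u_1^{-1}]^\wedge_p$. For this I would invoke the standard chromatic identification $L_{K(1)} E_n \simeq E_n[u_1^{-1}]^\wedge_p$ (obtained by inverting $v_1$, which on $E_n$ is a unit multiple of $u_1$, and then $p$-completing), together with its analogue $L_{K(1)} M \simeq (M[u_1^{-1}])^\wedge_p$ for $E_n$-modules $M$. Applying $L_{K(1)}$ to the fiber sequence $\tau_{\ge 0} E_n \to E_n \to \tau_{\le -1} E_n$ yields
\[
L_{K(1)} \tau_{\ge 0} E_n \too L_{K(1)} E_n \too L_{K(1)} \tau_{\le -1} E_n,
\]
and it suffices to show $\tau_{\ge 0} L_{K(1)} \tau_{\le -1} E_n = 0$. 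Since $E_n$ has its homotopy concentrated in even degrees, $\tau_{\le -1} E_n$ has homotopy in degrees $\le -2$; inverting the degree-zero class $u_1$ preserves this bound, and a short Milnor sequence argument shows that $p$-completing a spectrum whose homotopy vanishes in degrees $\ge -1$ still yields one whose homotopy vanishes in degrees $\ge 0$. Hence $\tau_{\ge 0} L_{K(1)} \tau_{\ge 0} E_n \simeq \tau_{\ge 0} L_{K(1)} E_n \simeq \tau_{\ge 0} E_n[u_1^{-1}]^\wedge_p$, and the map in the proposition is precisely the connective cover of the $K(1)$-localization map under this identification.

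Substituting these identifications into \Cref{main_thm} gives $\Map(\tau_{\ge 0} E_n, \Sph_p^\times) \simeq \Map(\tau_{\ge 0} E_n[u_1^{-1}]^\wedge_p, (\Sph_p^\times)^\wedge_p)$, realised by precomposition with the stated map, which yields the desired unique factorization. The main obstacle I anticipate is the folklore identification $L_{K(1)} E_n \simeq E_n[u_1^{-1}]^\wedge_p$ and its $E_n$-module variant: while standard in chromatic homotopy theory, extracting them cleanly (with the natural structure maps in place) is the one nonformal step; everything else is bookkeeping with truncations.
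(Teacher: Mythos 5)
Your overall strategy is identical to the paper's: apply \Cref{main_thm} with $X = \tau_{\ge 0}E_n$, show the discrete factor $\Hom_\Ab(\pi_0E_n,\FF_p^\times)$ vanishes because $p-1$ is a unit in $\pi_0E_n$, and identify $\tau_{\ge 0}L_{K(1)}\tau_{\ge 0}E_n$ with $\tau_{\ge 0}E_n[u_1^{-1}]^\wedge_p$ via the standard identification $L_{K(1)}E_n\simeq E_n[u_1^{-1}]^\wedge_p$. Your verification that $\tau_{\ge 0}E_n\in\Spch$ and your computation of the $\pi_0$ factor are both fine and just make explicit what the paper leaves implicit.

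There is, however, a genuine gap in the step where you reduce $L_{K(1)}\tau_{\ge 0}E_n$ to $L_{K(1)}E_n$. You invoke the module formula $L_{K(1)}M\simeq(M[u_1^{-1}])^\wedge_p$ for $E_n$-modules $M$ and apply it with $M=\tau_{\le -1}E_n$, but $\tau_{\le -1}E_n$ is \emph{not} an $E_n$-module: its homotopy groups form the graded $\pi_*E_n$-\emph{subset} concentrated in negative degrees, which is not a $\pi_*E_n$-submodule (multiplication by the periodicity class $u$ of degree $2$ escapes the truncation), so there can be no compatible $E_n$-module structure. In fact the formula gives a visibly wrong answer here: $(\tau_{\le -1}E_n[u_1^{-1}])^\wedge_p$ is nonzero (it has nontrivial $\pi_{-2}$), whereas the correct value is $L_{K(1)}\tau_{\le -1}E_n = 0$ outright, because bounded above spectra are $K(1)$-acyclic (indeed $K(m)$-acyclic for all $m\ge 1$, since $K(m)_*(HA)=0$ for any abelian group $A$ and any bounded above spectrum is a filtered colimit of finite extensions of Eilenberg--MacLane spectra). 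Replacing your Milnor-sequence estimate by this one-line observation both repairs the argument and shortens it, and is what the paper silently uses.
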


\begin{proof}
This follows from \Cref{main_thm} by observing that $\pi_0E_n/(p-1) = 0$ and hence it has no non-zero maps to $\FF_p^\times$, and that $L_{K(1)}E_n\simeq E_n[u_1^{-1}]^\wedge_p$. 
\end{proof}

\begin{rem}
Note that $\tau_{\ge 0}E_n[u_1^{-1}]^\wedge_p$ is a module over connective, $p$-completed complex $K$-theory $\ku^\wedge_p$. Hence, every map 
$\tau_{\ge 0}E_n \to \Sph_p^\times$ have to factor through the evaluation map 
\[
\hom(\ku^\wedge_p,\Sph_p^\times) \to \Sph_p^\times,
\]
i.e., through the (yet unknown) spectrum of ''complex $J$-homomorphisms''. 
\end{rem}

Next, we discuss maps from $K$-theory spectra  (and their suspensions) into $\Sph_p^\times$. 
The following result is an immediate consequence of Waldhausen's interpretation of the Quillen-Lichtenbaum conjecture.
\begin{prop}\label{K-theory-disc-chrom}
Let $R$ be a $\ZZ$-algebra in $\Sp$ (for example an ordinary associative ring). Then $K(R)\in \Spch$. Consequently, for all $t\in \NN$,
\[
\Map(\tau_{\ge 0}\Sigma^{-t}K(R),\Sph_p^\times)
\simeq \Hom(\pi_tK(R),\FF_p^\times) \times \Map(\tau_{\ge 0}\Sigma^{-t}L_{K(1)}K(R),\Sph_p^\times). 
\]
\end{prop}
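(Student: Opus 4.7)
The plan has two stages: first establish that $K(R)\in\Spch$, and second apply \Cref{main_thm} to $\tau_{\ge 0}\Sigma^{-t}K(R)$ and simplify both factors on the right.

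For the first stage, consider first $R=\ZZ$. Waldhausen's interpretation of the Quillen--Lichtenbaum conjecture for $\ZZ$ (cf.\ \cite{KahnQLTwo,RogQLTwo,voevodsky2011motivic,WaldKchrom}) yields that $K(\ZZ)\otimes F(2)$ is bounded above; in the language of this paper, $K(\ZZ)$ is almost $L_1^f$-local, and hence \Cref{almost_local_chromatic} places $K(\ZZ)$ in $\Spch$. For a general $\ZZ$-algebra $R$ in $\Sp$, the category of $R$-modules is $\ZZ$-linear, which equips $K(R)$ with a natural $K(\ZZ)$-module structure; \Cref{rem:chromatic_modules} then upgrades $K(\ZZ)\in\Spch$ to $K(R)\in\Spch$.

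For the second stage, I would first verify that $\tau_{\ge 0}\Sigma^{-t}K(R)\in\Spch$. Since $\Spch$ is a localizing ideal, it is closed under desuspensions, so $\Sigma^{-t}K(R)\in\Spch$. The truncation $\tau_{<0}\Sigma^{-t}K(R)$ is bounded above, hence lies in $\ideal{\ZZ}$ by \Cref{coacylcic_vs_bounded_above}; moreover $\ideal{\ZZ}\subseteq\Spch$ because $\ZZ\in\Spch$, which follows from \Cref{eq:split_ideal_Z} together with $\Sph[\inv{p}]=T(0)$ and $\FF_p=T(\infty)$ belonging to $\Spch$. The truncation fiber sequence $\tau_{\ge 0}\Sigma^{-t}K(R)\to\Sigma^{-t}K(R)\to\tau_{<0}\Sigma^{-t}K(R)$ then forces $\tau_{\ge 0}\Sigma^{-t}K(R)\in\Spch$.

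Now \Cref{main_thm} applied to $\tau_{\ge 0}\Sigma^{-t}K(R)$ gives
\[
\Map(\tau_{\ge 0}\Sigma^{-t}K(R),\Sph_p^\times)\simeq \Hom(\pi_tK(R),\FF_p^\times)\times \Map(\tau_{\ge 0}L_{K(1)}\tau_{\ge 0}\Sigma^{-t}K(R),(\Sph_p^\times)^\wedge_p),
\]
where I have used $\pi_0\tau_{\ge 0}\Sigma^{-t}K(R)=\pi_tK(R)$. To match the formula in the statement I would observe that $L_{K(1)}$ vanishes on $\ideal{\ZZ}$ and hence on every bounded above spectrum; applying it to the truncation fiber sequence therefore yields $L_{K(1)}\tau_{\ge 0}\Sigma^{-t}K(R)\simeq \Sigma^{-t}L_{K(1)}K(R)$, so taking connective covers recovers the claimed right-hand side.

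The main obstacle is the Quillen--Lichtenbaum input establishing $K(\ZZ)\in\Spch$, which rests on deep motivic results; everything else is a formal manipulation within the framework of localizing ideals built up in the preceding sections.
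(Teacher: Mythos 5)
Your proposal is correct and follows essentially the same route as the paper: reduce to $K(\ZZ)$ via the $K(\ZZ)$-module structure on $K(R)$, invoke the Quillen--Lichtenbaum input to place $K(\ZZ)$ in $\Spch$, and then feed $\tau_{\ge 0}\Sigma^{-t}K(R)$ into the main theorem. The only cosmetic difference is how you package the Quillen--Lichtenbaum input: you use that $K(\ZZ)\otimes F(2)$ is bounded above and appeal to the ``almost $L_1^f$-local'' machinery, whereas the paper uses the equivalent statement that the fiber of $K(\ZZ)\to L_{K(1)}K(\ZZ)$ is bounded above and concludes directly from \Cref{coacylcic_vs_bounded_above}; your extra verification that $L_{K(1)}$ annihilates $\ideal{\ZZ}$, so that $L_{K(1)}\tau_{\ge 0}\Sigma^{-t}K(R)\simeq \Sigma^{-t}L_{K(1)}K(R)$, is a useful spelling-out of what the paper leaves implicit.
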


\begin{proof}
First, observe that since bounded above spectra are in $\Spch$, if $K(R)\in \Spch$ so does $\tau_{\ge 0}\Sigma^{-t} K(R)$. Hence, the ``consequently'' part indeed follows from the first claim. Next, since $K(R)$ is a module over $K(\ZZ)$, by \Cref{rem:chromatic_modules} it suffices to show that $K(\ZZ)\in \Spch$. As shown in \cite[Appendix 4]{WaldKchrom}, the validity of the Quillen-Lichtenbaum conjecture for the ring $\ZZ$ (\cite{RogQLTwo,KahnQLTwo,voevodsky2011motivic}) implies that the map $K(\ZZ)\to L_{K(1)}K(\ZZ)$ has bounded above fiber, which in turn implies that  $K(\ZZ)\in \Spch$ by \Cref{coacylcic_vs_bounded_above} (note that $\ideal{\ZZ}\subseteq \Spch$).
\end{proof}





Finally, in \cite{hahn2020redshift}, Hahn and Wilson constructed $\EE_3$-$\MU$-algebra structures on the truncated Brown-Peterson spectra $\BP{n}$. For these $\EE_3$-algebras, they established a higher height analog of the Quillen-Lichtenbaum conjecture, and hence we have an analog of \Cref{K-theory-disc-chrom}.

\begin{prop}\label{HW-chromatic}
Let $\BP{n}$ be the $\EE_3$-algebra as in \cite{hahn2020redshift}.
For every $\BP{n}$-algebra $R$, the spectrum $K(R)$ belongs to $\Spch$. Consequently, for all $t\in \NN$, 
\[
\Map(\tau_{\ge 0}\Sigma^{-t}K(R),\Sph_p^\times) \simeq \Hom(\pi_tK(R),\FF_p^\times) \times  \Map(\tau_{\ge 0}\Sigma^{-t}L_{K(1)}K(R),\Sph_p^\times). 
\]
\end{prop}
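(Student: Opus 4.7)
The plan parallels the proof of \Cref{K-theory-disc-chrom}, with $\BP{n}$ replacing $\ZZ$ and the Hahn--Wilson higher-height redshift theorem in place of the classical Quillen--Lichtenbaum conjecture. I would first reduce to the single claim $K(\BP{n}) \in \Spch$. Given this, for any $\BP{n}$-algebra $R$ the unit map $\BP{n} \to R$ induces a map $K(\BP{n}) \to K(R)$, and since $\BP{n}$ is at least $\EE_2$, $K(\BP{n})$ inherits an $\EE_1$-ring structure and $K(R)$ becomes a $K(\BP{n})$-module; by \Cref{rem:chromatic_modules} this yields $K(R) \in \Spch$. The ``consequently'' clause is then formal: $\Spch$ is a localizing ideal in $\Sp$ and hence closed under desuspension, so $\Sigma^{-t}K(R) \in \Spch$; the coconnective piece $\tau_{<0}\Sigma^{-t}K(R)$ is bounded above and hence lies in $\ideal{\ZZ} \subseteq \Spch$ by \Cref{coacylcic_vs_bounded_above}; and the fiber sequence
\[
\tau_{\ge 0}\Sigma^{-t}K(R) \to \Sigma^{-t}K(R) \to \tau_{<0}\Sigma^{-t}K(R)
\]
places $\tau_{\ge 0}\Sigma^{-t}K(R)$ in $\Spch$. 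Applying \Cref{main_thm} to this connective spectrum produces the stated decomposition.

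For the central claim $K(\BP{n}) \in \Spch$, I would show that $K(\BP{n})$ is almost $L_{n+1}^f$-local and invoke \Cref{almost_local_chromatic}. The key input is the main redshift theorem of Hahn--Wilson \cite{hahn2020redshift}, which is a higher-height Lichtenbaum--Quillen statement asserting that the fiber of the localization map $K(\BP{n}) \to L_{n+1}^f K(\BP{n})$ is bounded above. Tensoring this fiber sequence with the type-$(n+2)$ finite spectrum $F(n+2)$ annihilates the right-hand term, since $L_{n+1}^f$ is smashing and $F(n+2)$ is $L_{n+1}^f$-acyclic; hence $K(\BP{n}) \otimes F(n+2)$ is equivalent to the $F(n+2)$-tensor of the bounded-above fiber and is itself bounded above, which is exactly the almost $L_{n+1}^f$-local condition.

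The main obstacle I anticipate is aligning the multiplicative hypotheses: one needs Hahn--Wilson in a form giving the bounded-above fiber for the $L_{n+1}^f$-localization of $K(\BP{n})$, and one needs a multiplicative refinement of algebraic $K$-theory in which $\BP{n}$-algebras are functorially $K(\BP{n})$-modules (via the $\EE_1$-ring structure on $K$ of an $\EE_2$-ring). Both ingredients are present in the literature, but they must be cited compatibly; once in place, the remainder of the argument is a transcription of \Cref{K-theory-disc-chrom}.
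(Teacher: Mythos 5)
Your proof is correct and reaches the same conclusion by a closely related but slightly reorganized route. Where the paper applies Hahn--Wilson's \cite[Corollary 3.4.4]{hahn2020redshift} directly to $K(R)$ for an arbitrary $\BP{n}$-algebra $R$ (so the module reduction it mentions is not actually load-bearing), and then finishes with the principal-ideal decomposition $\ideal{K(R)}\subseteq\ideal{K(R)[\inv p]}+\ideal{K(R)\otimes F(1)}$ from \Cref{principal_ideal_splits}, you instead run the argument through $K(\BP{n})$ via the $K(\BP{n})$-module structure on $K(R)$, and prove $K(\BP{n})\in\Spch$ by exhibiting it as almost $L_{n+1}^f$-local and invoking \Cref{almost_local_chromatic}. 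Since \Cref{almost_local_chromatic} is itself built on \Cref{principal_ideal_splits}, the two arguments are morally equivalent, but yours is arguably cleaner in that it reuses the ``almost local'' machinery that is already in place rather than re-deriving its effect.

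One technical point to correct: the Hahn--Wilson theorem gives a bounded-above fiber for $K(R)_{(p)}\to L_{n+1}^f K(R)_{(p)}$, not for the integral map $K(\BP{n})\to L_{n+1}^f K(\BP{n})$ as you write. The integral statement is false (already for $n=0$, where $\BP{0}=\ZZ_{(p)}$, the prime-to-$p$ and rational homotopy of $K(\ZZ_{(p)})$ is unbounded). This does not affect your conclusion, since you only use the fiber sequence after tensoring with the finite $p$-local spectrum $F(n+2)$, and $K(\BP{n})\otimes F(n+2)\simeq K(\BP{n})_{(p)}\otimes F(n+2)$; but the intermediate assertion should be stated for the $p$-localized $K$-theory.
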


\begin{proof}
As in the proof of \Cref{K-theory-disc-chrom}, we can reduce to show that $K(\BP{n})$ belongs to $\Spch$. By \cite[Corollary 3.4.4]{hahn2020redshift} the fiber of the map $K(R)_{(p)}\to L_{n+1}^fK(R)_{(p)}$ is bounded above. Since the target is in $\Spch$, so does the source. Thus, $K(R)\otimes F(1)\simeq K(R)_{(p)}\otimes F(1) \in \Spch$, which implies that $K(R)$ belongs to $\Spch$ as well, since $\ideal{K(\ZZ)}\subseteq \ideal{K(\ZZ)[\inv{p}]} + \ideal{K(\ZZ)\otimes F(1)}$ (see \Cref{principal_ideal_splits}). 
\end{proof}

By the purity result of \cite[Theorem A]{land2020purity},
$L_{K(1)}K(R)\simeq L_{K(1)}K(L_1^fR)$, and similarly 
$\pi_tK(R)/(p-1)\simeq \pi_tK(L_1^fR)/(p-1)$. Combining with \Cref{HW-chromatic}, we deduce the following:
\begin{cor}
With the settings of \Cref{HW-chromatic},
the map $R\to L_1^fR$ induces isomorphisms 
\[
\Map(\tau_{\ge 0}\Sigma^{-t}K(R),\Sph_{p}^\times) \simeq \Map(\tau_{\ge 0}\Sigma^{-t}K(L_1^fR),\Sph_{p}^\times)
\]
for all $t\in \NN$. 
\end{cor}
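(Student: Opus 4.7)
The plan is to apply \Cref{HW-chromatic} separately to $R$ and to $L_1^fR$ and to check that the two resulting decompositions are identified under the natural map $R\to L_1^fR$. First, I would verify that \Cref{HW-chromatic} is applicable to $L_1^fR$. Since $L_1^f$-localization is smashing, $L_1^fR$ is canonically an algebra over $L_1^f\BP{n}$, and hence in particular a $\BP{n}$-algebra via the composite $\BP{n}\to L_1^f\BP{n}\to L_1^fR$. Thus \Cref{HW-chromatic} supplies natural isomorphisms
\begin{align*}
\Map(\tau_{\ge 0}\Sigma^{-t}K(R),\Sph_p^\times) &\simeq \Hom(\pi_tK(R),\FF_p^\times)\times \Map(\tau_{\ge 0}\Sigma^{-t}L_{K(1)}K(R),(\Sph_p^\times)^\wedge_p), \\
\Map(\tau_{\ge 0}\Sigma^{-t}K(L_1^fR),\Sph_p^\times) &\simeq \Hom(\pi_tK(L_1^fR),\FF_p^\times)\times \Map(\tau_{\ge 0}\Sigma^{-t}L_{K(1)}K(L_1^fR),(\Sph_p^\times)^\wedge_p),
\end{align*}
and naturality of \Cref{main_thm} in the input spectrum guarantees that these decompositions are compatible with the map $K(R)\to K(L_1^fR)$ induced by $R\to L_1^fR$.

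Next, I would identify the two factors on the right-hand sides. The $K(1)$-local factor is handled directly by the purity theorem of Land--Mathew--Meier--Tamme \cite[Theorem A]{land2020purity}: the induced map $L_{K(1)}K(R)\to L_{K(1)}K(L_1^fR)$ is an equivalence, so applying $\tau_{\ge 0}\Sigma^{-t}$ and $\Map(-,(\Sph_p^\times)^\wedge_p)$ gives an equivalence of the second factors. For the discrete factor, the key observation is that $\FF_p^\times$ has exponent dividing $p-1$, so that $\Hom(A,\FF_p^\times)\simeq \Hom(A/(p-1),\FF_p^\times)$ for every abelian group $A$. Combined with the quoted consequence of purity, $\pi_tK(R)/(p-1)\simeq \pi_tK(L_1^fR)/(p-1)$, this identifies the first factors as well.

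Assembling these two identifications, the two sides of the displayed equivalence agree naturally in $R\to L_1^fR$, yielding the claim. The proof is essentially a bookkeeping exercise, and I do not foresee a substantive obstacle: the only non-trivial input, beyond \Cref{HW-chromatic}, is the invocation of the cited purity results, and the preservation of the $\BP{n}$-algebra hypothesis under $L_1^f$ is immediate from smashness.
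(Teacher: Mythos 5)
Your argument is correct and follows essentially the same route as the paper: apply \Cref{HW-chromatic} to both $R$ and $L_1^fR$, then identify the two factors via the purity theorem of Land--Mathew--Meier--Tamme (the $K(1)$-local factor directly, the $\Hom(-,\FF_p^\times)$ factor via $\Hom(A,\FF_p^\times)\simeq\Hom(A/(p-1),\FF_p^\times)$ together with the $\mathrm{mod}\,(p-1)$ consequence of purity). Your only addition is spelling out that $L_1^fR$ is again a $\BP{n}$-algebra because $L_1^f$ is smashing, which is indeed a necessary (if implicit) step for \Cref{HW-chromatic} to apply to $L_1^fR$; one could alternatively observe that $K(L_1^fR)$ is a $K(R)$-module and invoke \Cref{rem:chromatic_modules}, but your route is equally valid and self-contained.
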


\bibliographystyle{alpha}
\bibliography{strict}

\end{document}